\def\doi#1{   {\href{http://dx.doi.org/#1}
   {{\mdseries\ttfamily DOI}}}}
\def\arXiv#1{   {\href{http://arxiv.org/abs/#1}
   {{\mdseries\ttfamily arXiv:#1}}}}
\def\e{\varepsilon}
\newcommand{\gm}{\mathfrak{g}} 
\newcommand{\al}{\alpha}    \newcommand{\be}{\beta}
\newcommand{\de}{\delta}    \newcommand{\De}{\Delta}
  \newcommand{\ep}{\varepsilon}
    \newcommand{\la}{\lambda}
\newcommand{\om}{\omega}    \newcommand{\Om}{\Omega}
    \newcommand{\Ga}{\Gamma}
\newcommand{\R}{\mathbb{R}}
\newcommand{\pt}{\partial_t}\newcommand{\pa}{\partial}
\newcommand{\les}{{\lesssim}}
\newcommand{\beeq}{\begin{equation}}\newcommand{\eneq}{\end{equation}}
\newcommand{\Sp}{{\mathbb S}}\def\CO{\mathcal {O}}
\newcommand{\supp}{\text{supp}}
\def \endprf{\hfill  {\vrule height6pt width6pt depth0pt}\medskip}
\def\<{\langle}             \def\>{\rangle}
\def\({\left(}                 \def\){\right)}
\numberwithin{equation}{section}
\newtheorem{thm}{Theorem}[section]
 \newtheorem{lem}[thm]{Lemma}
 \newtheorem{rem}[thm]{Remark}
\title[Lifespan for semilinear wave equations in $2$-D exterior domains]
{Lifespan estimates for $2$-dimensional semilinear wave equations in asymptotically Euclidean exterior domains}
\author{Ning-An Lai}
\address{Institute of Nonlinear Analysis and Department of Mathematics\\
		Lishui University\\Lishui 323000, P. R. China}
		\email{ninganlai@lsu.edu.cn}
\author{Mengyun Liu$^{*}$}\thanks{* Corresponding author}
\address{Department of Mathematics\\Zhejiang Sci-Tech University\\Hangzhou 310018, P. R. China }
\email{mengyunliu@zstu.edu.cn}
\author{Kyouhei Wakasa}
\address{Department of Creative Engineering, National Institute of Technology, Kushiro College, 2-32-1 Otanoshike-Nishi, Kushiro-Shi, Hokkaido 084-0916, Japan}\email{wakasa@kushiro-ct.ac.jp}
\author{Chengbo Wang}
\address{School of Mathematical Sciences\\ Zhejiang University\\Hangzhou 310027, P. R. China}\email{wangcbo@zju.edu.cn }
\keywords{exterior domain, Strauss conjecture, asymptotically Euclidean, Riemann mapping theorem, Kelvin transform}
\subjclass[2010]{35L05, 35L71, 35B30, 35B33, 35B40, 35B44}
\date{\today}
\begin{document}
\maketitle
\begin{abstract}
In this paper we study the initial boundary value problem for two-dimensional semilinear wave equations with small data, in asymptotically Euclidean exterior domains.
We prove that if $1<p\le p_c(2)$, the problem admits almost the same upper bound of the lifespan as that of the corresponding Cauchy problem, only with a small loss for $1<p\le 2$. It is interesting to see that the logarithmic increase of the harmonic function in $2$-D has no influence to the estimate of the upper bound of the lifespan for
$2<p\le p_c(2)$.
One of the novelties is that we can deal with the problem with flat metric and general obstacles (bounded and simply connected), and it will be reduced to the corresponding problem with compact perturbation of the flat metric outside a ball.
\end{abstract}
\tableofcontents

\section{Introduction}

In this paper, we are interested in the investigation of the blow-up part of the analogs of the Strauss conjecture in two dimensional asymptotically Euclidean exterior domain
$(\Om, \gm)$.
We assume $\pa\Om$ is a smooth Jordan curve.
By asymptotically Euclidean
exterior domain, we mean that
it is a submanifold of the asymptotically Euclidean space
$(\R^{2}, \gm)$.
For simplicity, we assume $\Om=\R^2\backslash \overline{ B_R}$ for some $R>0$, see however, Lemma \ref{thm-diffeo}.

Recall that,
for $(\R^{2}, \gm)$,
the metric $\gm$ is assumed to be of the form
\begin{align}
\label{dl1}
\gm=\gm_{1}+\gm_{2}\ ,
\end{align}
where $\gm_{1}$ is a spherically symmetric, long range perturbation of the flat metric $\gm_0$, and $\gm_{2}$ is a short range perturbation.
With possibly changing the choice of $R$, we could write $\gm_{1}$, in terms of the polar coordinates
$x=r(\cos\theta,\sin\theta)\in \Om$, as follows
\beeq
\label{dl3}
\gm_{1}=K^{2}(r)dr^{2}+r^{2}d\theta^{2}\ ,
\eneq
where $d\theta^2$ is the standard metric on the unit circle $\Sp^{1}$, and
\beeq
\label{dl2}
|\pa^{m}_{r}(K-1)|\les  \<r\>^{-m-\rho_1}
,  m=0, 1, 2,
\eneq
for some given constant $\rho_1 \in (0, 1]$.
Here and in what follows,
$\langle x\rangle=\sqrt{1+|x|^2}$, and
we use $A\les B$ ($A\gtrsim B$) to stand for $A\leq CB$ ($A\geq CB$) where the constant $C$ may change from line to line. Concerning $\gm_{2}$,  we have
$$\gm=g_{jk}(x)dx^j dx^k\equiv \sum^{2}_{j,k=1}g_{jk}(x)dx^j dx^k\ ,\ \gm_2=g_{2, jk}(x)dx^j dx^k\ ,$$
where we have used the convention that Latin indices $j$, $k$ range from $1$ to $2$ and the Einstein summation convention for repeated upper and lower indices.
Furthermore, we assume $\gm_2$ satisfies
\beeq
\label{eq-g2-ae2}
\nabla^\be g_{2,jk}=\mathcal{O}(\<r\>^{-\rho_2-|\be|}), |\be|\le 2\ ,
\eneq
for some $\rho_2>1$.
By these assumptions, it is clear that
there exists a constant $\delta_{0}\in(0, 1)$ such that
\beeq
\label{unelp}
\delta_{0}|\xi|^2\le
g^{jk}(x)\xi_{j}\xi_{k}\le
\delta_{0}^{-1} |\xi|^2, \forall  x, \xi\in\R^{2},\
K(r)\in (\delta_{0}, 1/\delta_{0})
\ ,
\eneq
where
$(g^{jk}(x))$ denotes the inverse of $(g_{jk}(x))$.

With these preparations in hand, we may write out our problem explicitly, that is,
initial boundary value problem of semilinear wave equations with small initial data posed on asymptotically Euclidean manifolds $(\Om, \gm)$ with $\gm$ satisfies \eqref{dl1}-\eqref{eq-g2-ae2}
  \begin{equation}\label{1}
\left \{
\begin{aligned}
&u_{tt}-\Delta_\gm u=|u|^p,~~~t>0,~~x\in \Omega,\\
&u(0, x)=\varepsilon u_0(x),~~ u_t(0, x)=\varepsilon u_1(x),~~x\in \Om,\\
&u(t,x)=0,\quad \ t>0,\ x\in \pa\Om,\\
\end{aligned} \right.
\end{equation}
where,
$\Delta_{\gm} =\nabla^j\partial_{j}$ is the standard Laplace-Beltrami operator, $\ep>0$ is a small parameter. Concerning the initial data, we assume
\beeq
\label{hs2}
(u_0, u_1)\in H^1_0(\Om)\times L^2(\Om),\quad \supp(u_0, u_1)\subset B_{R_{0}}\ ,
u_0, u_1\ge 0, u_0\not\equiv 0, u_1\not\equiv 0,
\eneq
 for some $R_{0}>R$.

Such kind of problem is the generalization of the Strauss conjecture (see \cite{Strauss}): the following Cauchy problem of semilinear wave equation with small initial data with sufficient regularity and sufficient decay at infinity
\begin{equation}\label{1a}
\left \{
\begin{aligned}
&u_{tt}-\Delta u=|u|^p,~~~(t, x)\in (0, T)\times \mathbb{R}^n,\\
&u(0, x)=\varepsilon u_0(x),~~ u_t(0, x)=\varepsilon u_1(x),~~x\in \mathbb{R}^n,\\
\end{aligned} \right.
\end{equation}
admits a critical exponent $p_c(n)(n\geq 2)$, which means that if $1<p\leq p_c(n)$ then problem \eqref{1a} has no global solution in general, whereas the solution exists globally in time if $p>p_c(n)$ and $0<\e \ll 1$.
Here $p_c(n)$ is the positive root of the quadratic equation
\begin{equation}\label{1b}
\begin{aligned}
(n-1)p^2-(n+1)p-2=0.\\
\end{aligned}
\end{equation}
This conjecture has been essentially verified, we list all of the corresponding results in the following table
(one can also find it in \cite{Takamura2}):
\begin{center}
\begin{tabular}{|c||c|c|c|}
\hline
$n$  & $1<p<p_c(n)$ & $p=p_c(n)$ & $p_c(n)<p\le 1+4/(n-1)$\\
\hline
\hline
2 &
\begin{tabular}{l}
Glassey \cite{Glassey1}
\end{tabular}
 & Schaeffer \cite{Schaeffer}  &
\begin{tabular}{l}
Glassey \cite{Glassey2}
\end{tabular}
\\
\hline
3 & John \cite{John}
& Schaeffer \cite{Schaeffer}  &
\begin{tabular}{l}
John \cite{John}
\end{tabular}
\\
\hline
$\ge4$ & Sideris \cite{Sideris} &
\begin{tabular}{l}
Yordanov-Zhang \cite{YorZh06},\\
 Zhou \cite{Zhou2}, indep.
\end{tabular}
&
\begin{tabular}{l}
Georgiev-Lindblad-Sogge \cite{Georgive}
\end{tabular}
\\
\hline
\end{tabular}
\end{center}

If there is no global solution, then it is interesting to estimate the time when the solution blows up, i.e., the lifespan. The results have been established for two cases: (I) subcritical power $(1<p<p_c(n))$; (II) critical power $(p=p_c(n))$. For the former case, we now know that there exist two positive constants $c$ and $C$
such that the lifespan satisfies for $n\ge 2$ and
$\max(1,2/(n-1))<p<p_c(n)$
\begin{equation}\label{1c}
\begin{aligned}
c\varepsilon^{\frac{-2p(p-1)}{\gamma(n, p)}}\leq T(\varepsilon)\leq C\varepsilon^{\frac{-2p(p-1)}{\gamma(n, p)}},
\end{aligned}
\end{equation}
where $\gamma(n, p)=2+(n+1)p-(n-1)p^2>0$. We may read the facts from the following table:
\begin{center}
\begin{tabular}{|c||c|c|c|}
\hline
$n$  & Lower bound & Upper bound  \\
\hline
\hline
2 &
\begin{tabular}{l}
Zhou \cite{Zhou3}
\end{tabular}
 &
\begin{tabular}{l}
Zhou \cite{Zhou3}
\end{tabular}
\\
\hline
3 & Lindblad \cite{Lindblad3}
&Lindblad \cite{Lindblad3}
\\
\hline
$\ge4$ & Lai-Zhou \cite{Lai} &
\begin{tabular}{l}
Takamura \cite{Takamura2}
\end{tabular}
\\
\hline
\end{tabular}
\end{center}
For relatively small power in dimension $2$, we have richer results. For $(n, p)=(2, 2)$, Lindblad \cite{Lindblad3} obtained the following result \begin{equation}
\left\{
\begin{array}{ll}
\exists \lim\limits_{\e\rightarrow 0^{+}} a(\e)^{-1}T(\e)>0, & \mathrm{if}~\int_{\R^2}u_1(x)dx\neq 0,\\
\exists \lim\limits_{\e\rightarrow 0^{+}} \e T(\e)>0,& \mathrm{if}~\int_{\R^2}u_1(x)dx= 0,
\end{array}
\right.
\end{equation}
where $a(\e)$ denotes a number satisfying 
\[
a^2\e^2\log(1+a)=1.
\]
For $1<p<2$ and $n=2$, due to the results in Takamura \cite{Takamura2} and Imai et al. \cite{Takamura6}, we have
\begin{equation}
\left\{
\begin{array}{ll}
c\e^{-\frac{p-1}{3-p}}\le T(\e)\le C\e^{-\frac{p-1}{3-p}},&\mathrm{if}~\int_{\R^2}u_1(x)dx\neq 0,\\
c\e^{-\frac{2p(p-1)}{\gamma(2, p)}}\le T(\e)\le C\e^{-\frac{2p(p-1)}{\gamma(2, p)}},&\mathrm{if} ~\int_{\R^2}u_1(x)dx= 0.
\end{array}
\right.
\end{equation}

For the critical case $(p=p_c(n))$, the lifespan is much longer
and has the form:
\begin{equation}\label{1d}
\begin{aligned}
\exp(c\varepsilon^{-p(p-1)})\leq T(\varepsilon)\leq \exp(C\varepsilon^{-p(p-1)}).
\end{aligned}
\end{equation}
We have the following table:
\begin{center}
\begin{tabular}{|c||c|c|c|}
\hline
$n$  & Lower bound & Upper bound  \\
\hline
\hline
2 &
\begin{tabular}{l}
Zhou \cite{Zhou3}
\end{tabular}
 &
\begin{tabular}{l}
Zhou \cite{Zhou3}
\end{tabular}
\\
\hline
3 &Zhou \cite{Zhou5}
&Zhou \cite{Zhou5}
\\
\hline

$\ge4$ &
\begin{tabular}{l}
Lindblad-Sogge \cite{Lindblad2} for $n\leq 8$ \\
or radial solution
\end{tabular}
&
\begin{tabular}{l}
Takamura-Wakasa \cite{Takamura}
\end{tabular}
\\
\hline
\end{tabular}
\end{center}

As waves propagate to infinity of the space, hence besides the Cauchy problem in the whole space, it is also interesting to consider the corresponding obstacle problem, i.e., the initial boundary value problem in exterior domain. Due to the difficulty caused by the boundary, such kind of problem has not been well understood, particularly for the global existence in high dimensions ($n\ge 5$). Anyway, we have the following results of global existence vs blow-up:
\begin{center}
\begin{tabular}{|c||c|c|c|}
\hline
$n$  & $1<p<p_c(n)$ & $p=p_c(n)$ & $p>p_c(n)$ \\
\hline
\hline
2 &
\begin{tabular}{l}
Li-Wang \cite{Li}
\end{tabular}
 & Lai-Zhou \cite{Lai1} &
\begin{tabular}{l}
Smith-Sogge-Wang \cite{Smith}
\end{tabular}
\\
\hline
3 & Zhou-Han \cite{Zhou7} & Lai-Zhou \cite{Lai2}
&
\begin{tabular}{l}
Hidano et al \cite{Hidano}
\end{tabular}
\\
\hline
4 & Zhou-Han \cite{Zhou7} & Sobajima-Wakasa \cite{SW}  &
\begin{tabular}{l}
Du et al \cite{Du},
reproved by\\
Hidano et al \cite{Hidano}
\end{tabular}
\\
\hline
$\ge5$ & Zhou-Han \cite{Zhou7} &
\begin{tabular}{l}
Lai-Zhou \cite{Lai3},\\
reproved by\\
Sobajima-Wakasa \cite{SW}
\end{tabular} &
\begin{tabular}{l}
$p=2$, Metcalfe-Sogge \cite{Metcalfe},\\
reproved by Wang \cite{Wang2}
\end{tabular}
\\
\hline
\end{tabular}
\end{center}

Just like the Cauchy problem, it is meaningful to study the lifespan for the blow-up exponent. We expect the same estimate as that of the Cauchy problem, regardless of the boundary obstacle, at least when the obstacle is nontrapping.
Denoting the expected sharp lower bound and upper bound by \lq\lq L" and \lq\lq U" respectively, we have the following known results:
\vskip5pt
\begin{center}
\begin{tabular}{|c||c|c|}
\hline
$n$  & $1<p<p_c(n)$ & $p=p_c(n)$\\
\hline
\hline
2 &
\begin{tabular}{l}
L : {\bf ?}\\
U : {\bf This work}\\
\end{tabular}
&
\begin{tabular}{l}
L : {\bf ?}\\
U :  {\bf This work}
\end{tabular}
\\
\hline
3 &
\begin{tabular}{l}
L : Du-Zhou \cite{Du1}$(p=2)$,\\
~~~~ Yu \cite{Yu} $(2<p<1+\sqrt2)$\\
~~~~~Wang \cite{Wang2}$(2\leq p<1+\sqrt2)$\\
U : Zhou-Han \cite{Zhou7}
\end{tabular}
&
\begin{tabular}{l}
L : Yu \cite{Yu}
~~~~$(T(\varepsilon)\geq \exp(c\varepsilon^{-\sqrt2}))$,\\
~~~~Improved by Wang \cite{Wang2}\\
~~~~\ \ $(T(\varepsilon)\geq \exp(c\varepsilon^{-2\sqrt2}))$\\
U :  Lai-Zhou \cite{Lai2},\\
~~~~~reproved by
Sobajima-Wakasa \cite{SW}
\end{tabular}
\\
\hline
4 &
\begin{tabular}{l}
L : {\bf ?}\\
U : Zhou-Han \cite{Zhou7}
\end{tabular}
&
\begin{tabular}{l}
L : Zha-Zhou \cite{Zha}\\
~~~~~reproved by Wang \cite{Wang2}\\
U :Sobajima-Wakasa \cite{SW}
\end{tabular}
\\
\hline
$\ge5$ &
\begin{tabular}{l}
L : {\bf ?}\\
U : Zhou-Han \cite{Zhou7}
\end{tabular}
&
\begin{tabular}{l}
L : {\bf ?} \\
U :  Lai-Zhou \cite{Lai3},\\
~~~~~reproved by
Sobajima-Wakasa \cite{SW}
\end{tabular}

\\
\hline
\end{tabular}
\end{center}
\vskip5pt
We also have to mention the generalization of problem \eqref{1a} from Euclidean space to other manifold, such as asymptotically Euclidean manifolds (see \cite{MW}, \cite{SWa}, \cite{WY1} and references therein), and black hole spacetime (see \cite{CG}, \cite{LLM}, \cite{LMS}, \cite{MMT} and references therein).
One can find a detailed description of such kind of generalization in a recent survey paper \cite{Wang1}. Another direction is to consider the initial boundary value problem in exterior domain with big initial data (see \cite{LZ}, \cite{SS} and references therein).

 Before stating our results, we shall make a hypothesis.\\
  {\bf Hypothesis:}
There exists  $\la_0\in(0,1/(2R))$, such that for any $\la\in (0,\la_0)$,  we could find a solution $\phi_\la$ solving
\beeq\label{eq-eigen}
\Delta_{\gm}\phi_\la=\la^{2}\phi_\la \ , x\in  \Om,
\ \phi_\la|_{\pa\Om}=0\ ,
\eneq
which enjoys the following uniform estimates (independent of $\la$)
\beeq\label{H1}\tag{H}
\phi_\la\sim \left\{
\begin{array}{ll}
	\frac{\ln r/R}{\ln (R\la)^{-1}} & r\le \la^{-1},\\
	\<r\la\>^{-1/2}e^{\la\int^{r}_{R}K(\tau)d\tau} & r\ge \la^{-1},
\end{array}
\right.
\eneq
for any $\la\in (0, \la_0)$.

\begin{thm}\label{thm1}
Let $p\in (1, p_c(2)]$ and assume  \eqref{H1}. Consider the problem \eqref{1} with initial data \eqref{hs2}, posed on asymptotically Euclidean exterior domain $(\Om,\gm)$ satisfying \eqref{dl1}-\eqref{eq-g2-ae2}. Then we have the following \begin{enumerate}
  \item
There exists a unique weak solution  $u(t, x)\in C_{t} H^1_0\cap C_{t}^1 L^2$ to the initial boundary value problem \eqref{1} on
$[0, T(\ep))\times \Om$, where $T(\ep)$ denotes the lifespan, i.e., the maximal time of existence.
  \item There exist constants $C,\e_0>0$ independent of $\e$
such that
for any $\e\in(0,\e_0)$, we have
\begin{equation}
\label{lifespan1}
T(\ep)\leq
\left\{
\begin{array}{ll}
\exp\left(C\e^{-p(p-1)}\right),& p=p_c(2),\\
C\e^{-\frac{2p(p-1)}{2+3p- p^2}}, & 2\leq p<p_c(2),\\
C
(\varepsilon^{-1}\ln (\varepsilon^{-1}))^{(p-1)/(3-p)}
, & 1<p<2.
\end{array}
\right.
\end{equation}
\end{enumerate}
\end{thm}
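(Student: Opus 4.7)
The plan is to establish all three upper bounds by the test-function / Kato-iteration method of Yordanov--Zhang and Zhou, using the Dirichlet eigenfunctions $\phi_\la$ supplied by Hypothesis \eqref{H1} as test functions in place of the flat-space $K_0$-type functions. The exterior Dirichlet condition $\phi_\la|_{\pa\Om}=0$ is crucial: it ensures that no boundary terms enter the integration by parts, so the structure of the Cauchy argument can be imported wholesale. The only genuinely new quantitative input is the behaviour of $\phi_\la$ on the data support and in the light cone, which by \eqref{H1} differs from the flat Cauchy test function only by explicit logarithmic factors.

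First I would derive the basic ODE. A positivity argument (the linear exterior Dirichlet wave flow preserves nonnegativity of smooth compactly supported data, and the nonlinear Duhamel correction is nonnegative) gives $u\ge 0$ on $[0,T(\e))$. Setting
\[
F_\la(t)=\int_\Om u(t,x)\,\phi_\la(x)\,dv_\gm,
\]
multiplying the equation in \eqref{1} by $\phi_\la$ and integrating, $\phi_\la|_{\pa\Om}=0$ together with $\Delta_\gm\phi_\la=\la^2\phi_\la$ yields
\[
F_\la''(t)-\la^2 F_\la(t)=\int_\Om |u|^p\phi_\la\,dv_\gm,
\]
so by Duhamel
\[
F_\la(t)\ge c e^{\la t}\!\int_\Om(u_0+\la^{-1}u_1)\phi_\la\,dv_\gm+\int_0^t\frac{\sinh\la(t-s)}{\la}\int_\Om|u|^p\phi_\la\,dv_\gm\,ds.
\]
The inner asymptotic in \eqref{H1} gives $F_\la(0)+\la^{-1}F_\la'(0)\gtrsim \e/\ln(R\la)^{-1}$ provided $\la\le\la_0$.

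Next I would close the loop nonlinearly. Finite propagation and uniform ellipticity \eqref{unelp} give $\supp u(t,\cdot)\subset B_{R+R_0+t}$, and H\"older produces
\[
\int_\Om|u|^p\phi_\la\,dv_\gm\ge\Bigl(\int_{B_{R+R_0+t}}\phi_\la^{-1/(p-1)}\,dv_\gm\Bigr)^{-(p-1)}F_\la(t)^p.
\]
Evaluating the denominator by \eqref{H1} — the outer region $r\ge\la^{-1}$ contributes a $\la^{-2}$-weighted exponential through $e^{\la\int_R^r K}$, while the inner region contributes controlled logarithmic powers — and optimising via $\la\sim 1/t$, I convert the above into a Kato-type differential inequality. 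The slicing iteration of Yordanov--Zhang / Sobajima--Wakasa then extracts the polynomial bound $T(\e)\le C\e^{-2p(p-1)/(2+3p-p^2)}$ throughout $2\le p<p_c(2)$ and the double-exponential bound $T(\e)\le\exp(C\e^{-p(p-1)})$ at $p=p_c(2)$. For the small-power range $1<p<2$ the $\phi_\la$-moment is too weak, so I would instead work with the total mass $F(t)=\int_\Om u\,dv_\gm$: the free exterior-Dirichlet solution inherits from \eqref{H1} the logarithmic growth $\int_\Om u^{\mathrm{lin}}\,dv_\gm\gtrsim \e\ln t$ which already drives blow-up in Lindblad's flat treatment, and combining with $F''(t)\gtrsim t^{-2(p-1)}F(t)^p$ (H\"older on the light cone) together with the Takamura Kato-type lemma gives $T(\e)\le C(\e^{-1}\ln\e^{-1})^{(p-1)/(3-p)}$.

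The main obstacle will be the critical case $p=p_c(2)$: the slicing iteration must absorb, at every step, the logarithmic factor $(\ln(R\la)^{-1})^{-1}$ inherited from the inner asymptotic of \eqref{H1}, which is absent in the flat Cauchy setting because there the test function is $O(1)$ on the data support. These logarithmic losses accumulate along the cascade, and showing that they do not degrade the sharp exponent $p(p-1)$ inside the double exponential — precisely the \emph{no loss} phenomenon advertised in the abstract for $2<p\le p_c(2)$ — is the delicate quantitative step. A secondary, more structural point already handled in the setup is the reduction from an arbitrary simply connected obstacle to the annular model $\Om=\R^2\setminus\overline{B_R}$, which is the content of Lemma~\ref{thm-diffeo}.
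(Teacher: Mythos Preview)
Your approach is genuinely different from the paper's, and it has real gaps.

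\textbf{What the paper actually does.} The paper never derives an ODE for a moment $F_\la(t)$, never invokes positivity of $u$, and never optimises $\la\sim 1/t$. Instead it uses space--time test functions with a time cut-off $\eta_T^{2p'}(t)$: first $\eta_T^{2p'}\phi_0$, where $\phi_0$ is the \emph{harmonic} Dirichlet function of Lemma~\ref{phi_0} (which you do not mention at all), then $\eta_T^{2p'}e^{-\la_1 t}\phi_{\la_1}$ with a \emph{fixed} $\la_1$. H\"older and Young on the resulting space--time integrals, together with the mixed estimate of Lemma~\ref{asyphi0la} for $\int \phi_0^{-1/(p-1)}e^{-p'\la_1 t}\phi_{\la_1}^{p'}$, give the subcritical bounds. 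For $p=p_c(2)$ the varying $\la$ is packaged into the averaged test function $b_q(t,x)=\int_0^{\la_1}e^{-\la t}\phi_\la(x)\la^{q-1}\,d\la$, whose two-sided bounds (Lemma~\ref{lembq}) feed a sliced functional $Y(M)$ satisfying \eqref{Yp}; the conclusion then follows from a Gronwall-type lemma. The logarithm is absorbed once, in the lower bound $b_q\gtrsim \phi_0(\ln t)^{-1}t^{-q}$, not iteratively.

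\textbf{Where your proposal breaks.} For $1<p<2$ you test against the constant $1$. Since $1$ does not vanish on $\pa\Om$, integration by parts produces $\int_{\pa\Om}\pa_\nu u\,dS$, and under Dirichlet boundary conditions with $u\ge 0$ this term has the \emph{wrong sign}: you only get $F''(t)\le \int_\Om|u|^p$, which is useless for blow-up. The paper's remedy is precisely the harmonic $\phi_0$ with $\phi_0|_{\pa\Om}=0$ and $\phi_0\sim\ln(r/R)$; the logarithmic growth lives in the test function, not in the linear flow. Your assertion that \eqref{H1} implies $\int_\Om u^{\mathrm{lin}}\,dv_\gm\gtrsim \e\ln t$ is unsupported: \eqref{H1} concerns the eigenfunctions $\phi_\la$, not the free propagator.

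Two further issues: (i) the step ``optimise via $\la\sim 1/t$'' is incompatible with the ODE you wrote for $F_\la$, which requires $\la$ fixed; (ii) preservation of nonnegativity by the exterior Dirichlet wave flow is not a general fact and is not used (or needed) in the paper's argument.
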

\begin{rem}
Comparing with the upper bounds with that of the corresponding Cauchy problem,
we see that we have the same upper bounds of the lifespan, when $2<p\le p_c(2)$ (with log loss for the $1<p\le 2$).
It will be interesting to determine the sharp estimate of the lifespan.
We expect that our upper bound is sharp, at least when the obstacle is nontrapping and $2<p\le p_c(2)$.
\end{rem}

The local well-posed result follows from a standard energy argument. For the upper bound of lifespan estimates, it relies on the so-called test function method. For the subcritical case, we have to show the proper asymptotic behaviors of two test functions, that is, \eqref{asyphi0} for $\phi_0$ and \eqref{H1} for $\phi_{\lambda_1}$ with some fixed $\lambda_1\in (0, \lambda_0)$. However, for the critical case, we need to use a family of test function $\phi_{\lambda}$ with $\lambda$ varying in $(0, \lambda_0)$, to construct another test function $b_q$ with more subtle asymptotic behavior as stated in \eqref{bq3} and \eqref{bqge3} below. Concerning $\phi_0$, we have
\begin{lem}
\label{phi_0}
Let $(\Om, \gm)$ be an asymptotically Euclidean exterior domain
 satisfying \eqref{dl1}-\eqref{eq-g2-ae2}.
 Then there exists a solution $\phi_0$ to
\beeq
\label{test1}
\Delta_{\gm}\phi_0=0 \ , x\in \Om\ , \
\phi_{0}|_{\pa\Om}=0
\eneq
satisfying
\beeq
\label{asyphi0}
\phi_0(x)\simeq \ln r / R\ .
\eneq
\end{lem}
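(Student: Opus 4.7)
The plan is to build $\phi_0$ as a parametrix plus correction, $\phi_0=\psi+w$, where $\psi$ is an explicit radial $\gm_1$-harmonic function and $w$ is a bounded correction that repairs the error introduced by the short-range piece $\gm_2$.

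First, I would exploit the spherical symmetry of $\gm_1=K(r)^2 dr^2+r^2d\theta^2$. The Laplace--Beltrami operator on radial profiles is
\[
\Delta_{\gm_1} f(r)=\frac{1}{rK(r)}\pa_r\!\Bigl(\frac{r}{K(r)}\, f'(r)\Bigr),
\]
so integrating the ODE twice and normalizing at $r=R$ yields the explicit $\gm_1$-harmonic, boundary-vanishing profile
\[
\psi(r):=\int_R^r \frac{K(s)}{s}\,ds.
\]
From \eqref{dl2} with $m=0$ one has $(K-1)/s=O(\<s\>^{-1-\rho_1})$, which is integrable on $[R,\infty)$, so $\psi(r)=\ln(r/R)+O(1)$, i.e.\ $\psi\simeq \ln r/R$.

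Next I would estimate the error $F:=\Delta_\gm\psi=(\Delta_\gm-\Delta_{\gm_1})\psi$. The difference operator has coefficients (and their derivatives up to order two) controlled by $\<r\>^{-\rho_2}$ thanks to \eqref{eq-g2-ae2}; combined with $|\nabla\psi|\les 1/r$ and $|\nabla^2\psi|\les 1/r^2$ this gives $F=O(\<r\>^{-2-\rho_2})$, which belongs to $L^1\cap L^2(\Om)$ since $\rho_2>1$. It therefore suffices to solve the exterior Dirichlet problem
\[
\Delta_\gm w=-F\text{ in }\Om,\qquad w|_{\pa\Om}=0,\qquad w=o(\ln r)\text{ as }|x|\to\infty,
\]
since then $\phi_0:=\psi+w$ inherits $\phi_0\simeq \ln r/R$ and solves \eqref{test1}.

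To produce $w$, I would argue by compact exhaustion: for each large $n$, Lax--Milgram together with the uniform ellipticity \eqref{unelp} gives a unique $w_n\in H^1_0(\Om\cap B_n)$ with $\Delta_\gm w_n=-F$ on $\Om\cap B_n$. Uniform $L^\infty$ bounds in $n$ follow by maximum-principle comparison with a barrier built from the Dirichlet Green's function of the flat Laplacian on $\R^2\setminus\overline{B_R}$, which via the Kelvin inversion $x\mapsto R^2 x/|x|^2$ is known explicitly and tends to a finite limit as $|x|\to\infty$; the short-range perturbation $\gm_2$ plus the integrable decay of $F$ preserves this boundedness. Extracting a subsequential $H^1_{\mathrm{loc}}$-limit yields the desired $w$.

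The main obstacle is precisely this last step, because in two dimensions the Dirichlet Laplacian sits exactly at the threshold of a logarithmic resonance, and a naive energy argument only controls $\nabla w$ in $L^2$, which does not rule out $\ln r$-growth of $w$. What rescues the construction is that $\pa\Om\neq\emptyset$ together with the Dirichlet condition removes constants from the kernel and forces the perturbed Green's function to converge to a finite limit at infinity, exactly as in the flat exterior model.
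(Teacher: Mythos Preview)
Your parametrix $\psi(r)=\int_R^r K(s)s^{-1}\,ds$ and the reduction to a bounded correction $w$ match the paper's setup exactly. Where you diverge is in how to bound $w$: you propose compact exhaustion plus a barrier argument directly on the exterior domain, whereas the paper applies the Kelvin inversion $x\mapsto x/|x|^2$ to pull the problem back to the punctured disk $B_{1/R}\setminus\{0\}$, observes that the transformed source $(\Delta_{h_0}-\Delta_h)\Phi_0=\mathcal{O}(r^{\rho_2-2})$ can be written in divergence form $\pa_1 F$ with $F\in L^\infty(B_{1/R})$, and then invokes Meyer's theorem to place the correction in $W^{1,q}$ for some $q>2$, hence in $L^\infty$. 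The Kelvin route is cleaner precisely because it sidesteps the 2D logarithmic threshold you flag, trading the unbounded domain for a compact one where standard elliptic theory applies off the shelf. Your route can be made to work, but two points are loose as written. First, $\gm$ is only a \emph{long}-range perturbation of the flat metric (through $\gm_1$, with $\rho_1\le 1$), so a barrier built from the flat exterior Green's function is not automatically a $\Delta_\gm$-supersolution; the honest fix is to build a radial $\gm_1$-barrier $b$ with $K^{-1}r\,b'(r)=\int_r^\infty sK(s)g(s)\,ds$ for some $g\gtrsim|F|$ (this choice of integration constant is what makes $b$ bounded), and then check that $(\Delta_\gm-\Delta_{\gm_1})b=\mathcal{O}(r^{-2-2\rho_2})=o(g)$ since $\rho_2>1$. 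Second, boundedness of $w$ alone does not yield $\phi_0\simeq\ln r/R$ near $r=R$, where $\ln r/R\to 0$; the paper isolates this at the outset, reducing to $\phi_0\simeq\ln r$ for $r\gg 1$ and then invoking the strong maximum principle ($\phi_0>0$ in $\Om$) together with Hopf's lemma ($\pa_r\phi_0|_{r\to R_+}>0$) to recover the two-sided bound near the boundary. You should add that step.
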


\begin{rem}
It will be clear from the proof that the first two upper bounds in \eqref{lifespan1} hold for non-negative data with either $u_{0}$ or $u_{1}$ nontrivial. While, for the last upper bound, we need only to assume
non-negative data with
$u_{1}\neq 0$.
\end{rem}

Noting that Theorem \ref{thm1} holds under the hypothesis \eqref{H1}. Let us review some cases where the assumption \eqref{H1} is valid. It is well known for the Euclidean space $(\R^2, \gm_0)$, where
$\phi_\la$ could be given by the spherical average of $e^{\la x\cdot\om}$,
$$\phi_\la(x)=\int_{\mathcal{S}^{1}}e^{\la x\cdot\omega}d\omega\sim
 \<r\la\>^{-1/2}e^{\la r}
\ ,$$
see Yordanov-Zhang \cite{YorZh06}.
When $\gm_3$ is  an exponential perturbation, that is, there exists $\al>0$ so that
\beeq
\label{dlfjia}
|\nabla g_{3,jk}|+|g_{3,jk}|\les  e^{-\al\int^{r}_{R}K(\tau)d\tau}\ ,
\eneq
 the corresponding estimate for ($\R^2, \gm_1+\gm_3$)
is recently obtained
for $\gm=\gm_1+\gm_3$ by Liu-Wang \cite{LWp}, while the case
 $\gm=\gm_0+\gm_3$
was obtained by
Wakasa-Yordanov in \cite{WaYo19}.
Based on these results, we could verify the hypothesis \eqref{H1}, in the case of $\gm=\gm_{1}+\gm_{3}$,
and thus obtain the following

\begin{thm}\label{thm2}
Let $\gm=\gm_1+\gm_3$. Then the hypothesis \eqref{H1} holds and we have the same results as that in Theorem \ref{thm1}.
\end{thm}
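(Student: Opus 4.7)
\textbf{Proof plan for Theorem \ref{thm2}.}

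By Theorem \ref{thm1}, it suffices to verify the hypothesis \eqref{H1} when $\gm=\gm_1+\gm_3$. The strategy is to take the whole-plane eigenfunction produced by Liu-Wang \cite{LWp} and subtract off an exponentially-decaying correction so as to enforce the Dirichlet condition on $\pa\Om$.

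Let $\tilde\phi_\la$ denote the positive solution of $\Delta_\gm\tilde\phi_\la=\la^2\tilde\phi_\la$ on $(\R^2,\gm_1+\gm_3)$ supplied by Liu-Wang, satisfying $\tilde\phi_\la\sim 1$ for $r\le\la^{-1}$ and $\tilde\phi_\la\sim\<r\la\>^{-1/2}e^{\la\int_R^r K(\tau)d\tau}$ for $r\ge\la^{-1}$, uniformly in $\la\in(0,\la_0)$; this is the generalized modified Bessel function $I_0(\la r)$ in our metric. In parallel I would construct a companion decaying solution $\psi_\la$ on $\Om$, the analog of $K_0(\la r)/K_0(\la R)$, satisfying $\Delta_\gm\psi_\la=\la^2\psi_\la$ with
\[
\psi_\la\sim 1-\frac{\ln(r/R)}{\ln(R\la)^{-1}}\ (R\le r\le\la^{-1}),\qquad \psi_\la\sim\<r\la\>^{-1/2}e^{-\la\int_R^r K(\tau)d\tau}\ (r\ge\la^{-1}).
\]
For the spherically symmetric background $\gm_1$ alone, $\psi_\la$ solves an ODE whose $K_0$-type asymptotics can be extracted by a Liouville transformation; the non-radial piece $\gm_3$ is then absorbed by a Picard iteration using the resolvent of the coercive Dirichlet operator $-\Delta_\gm+\la^2$ on $\Om$, whose convergence is guaranteed by the exponential-decay assumption \eqref{dlfjia}.

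With $\tilde\phi_\la$ and $\psi_\la$ at hand, set
\[
\phi_\la:=\tilde\phi_\la-v_\la,
\]
where $v_\la$ is the unique decaying solution of $(\Delta_\gm-\la^2)v=0$ in $\Om$ with $v|_{\pa\Om}=\tilde\phi_\la|_{\pa\Om}$, built by a Poisson-type extension along $\psi_\la$ whose dominant radial component is $\tilde\phi_\la(R)\,\psi_\la(r)/\psi_\la(R)$. Then $\phi_\la$ satisfies the eigenvalue equation, vanishes on $\pa\Om$, and \eqref{H1} follows by regions: for $r\ge\la^{-1}$, the growing $\tilde\phi_\la$ beats $v_\la$ by the factor $e^{2\la\int_R^r K\,d\tau}$ so that $\phi_\la\sim\tilde\phi_\la$; for $R\le r\le\la^{-1}$, $\tilde\phi_\la\sim1$ and the middle-region asymptotic of $v_\la$ combine by direct subtraction to give $\phi_\la\sim\ln(r/R)/\ln(R\la)^{-1}$.

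The main obstacle is the sharp two-sided control of $\psi_\la$ on the intermediate scale $R\le r\le\la^{-1}$, where the precise logarithmic factor $\ln(R\la)^{-1}$ must be extracted uniformly in $\la$. The radial $K_0$-type asymptotic coming from $\gm_1$ has to be shown to dominate the non-radial contribution from $\gm_3$ at every scale in this window; this is where the exponential decay of $\gm_3$ is essential, ensuring the perturbation series for $\psi_\la$ converges with constants independent of $\la\in(0,\la_0)$.
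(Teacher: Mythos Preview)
Your route differs from the paper's in two structural ways, and the second difference is where your proposal has a genuine gap.

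First, the paper does \emph{not} work with the whole-plane Liu--Wang eigenfunction for the full metric $\gm_1+\gm_3$. It instead handles the spherically symmetric metric $\gm_1$ completely first, producing a radial Dirichlet eigenfunction $\phi_\la$ on $\Om$, and only then treats $\gm_3$ as a perturbation: writing $\Phi_\la=\phi_\la-\psi$ with $\psi$ solving $(\Delta_\gm-\la^2)\psi=-(\Delta_{\gm_1}-\Delta_\gm)\phi_\la$, one shows via elliptic $L^2\to W^{2,4}\to L^\infty$ estimates that $|\psi|\les\la^{1/2}\min(r-R,1)\les\la^{1/2}\phi_0\ll\phi_\la$. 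The exponential decay \eqref{dlfjia} enters only to make the forcing $(\Delta_{\gm_1}-\Delta_\gm)\phi_\la$ lie in every $L^q$ with norm $\les\la^2$; no Picard iteration or $K_0$-type construction for the non-radial part is needed.

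Second, and this is the gap: even in the radial case you have not actually produced the sharp intermediate-scale bound $\phi_\la\sim\ln(r/R)/\ln(R\la)^{-1}$. You propose to obtain it by subtracting a $K_0$-type solution $\psi_\la$ from a growing solution, both of size $\sim 1$ on $R\le r\le\la^{-1}$, and reading off the logarithm as the leading difference. But a two-sided bound on a difference of comparable quantities requires matching constants, not just asymptotics; you would need $\tilde\phi_\la-v_\la$ bounded above \emph{and below} by multiples of $\ln(r/R)/\ln(R\la)^{-1}$ uniformly in $\la$, and your sketch does not supply the lower bound. The paper sidesteps this entirely: once the radial $\phi_\la$ exists, it gets the intermediate-region bounds by two maximum-principle comparisons on the annulus $R\le r\le\la^{-1}$, namely $\phi_\la\ge\delta\,\phi_0(r)h_\la(r)/\phi_0(\la^{-1})$ and $\phi_\la\le\phi_0(r)/\phi_0(\la^{-1})$, using that $\phi_0$ is $\gm_1$-harmonic and $h_\la$ is increasing. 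This is both simpler and avoids constructing the decaying $K_0$-analogue altogether.
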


In the above,
we have considered exclusively on the asymptotically Euclidean exterior domains to disk.
It is natural to ask what happens for the general
asymptotically Euclidean exterior domains $(D, \gm)$.
It turns out that,
when $\pa D$ is a smooth Jordan curve,
the general problem could be reduced to the case of disk exterior.
Actually, with the help of the Riemann mapping theorem and a cut-off argument, we could prove the following
\begin{lem}\label{thm-diffeo}
Let $D\subset\R^{2}$ be exterior domain to
a smooth Jordan curve  $\pa D$. Then there exists a  diffeomorphism mapping preserving the boundary
\beeq
\label{eq-diffeo}\mathcal{A}: D\to \R^{2}\backslash \overline{ B_{R}}\ ,\eneq
for some $R>0$,
which is an identity map for $r\gg 1$.
\end{lem}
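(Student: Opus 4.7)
The plan is to combine the exterior Riemann mapping theorem with a smooth interpolation to the identity near infinity. Since the bounded component $K:=\R^{2}\setminus D$ is a compact simply connected region with smooth Jordan boundary, the exterior Riemann mapping theorem provides a conformal bijection $\Phi_{0}:D\to \R^{2}\setminus \overline{B_{R_{0}}}$ with $\Phi_{0}(\infty)=\infty$. Normalizing so that $\Phi_{0}'(\infty)=1$ yields a Laurent expansion
$$\Phi_{0}(z)=z+a_{0}+\sum_{k\ge 1}a_{k}z^{-k}\qquad\text{as }|z|\to\infty;$$
setting $\Phi(z):=\Phi_{0}(z)-a_{0}$ translates the target to the exterior of a disk $B_{R}(z_{0})$ of radius $R=R_{0}$ centered at $z_{0}:=-a_{0}$, and produces
$$\Phi(z)=z+O(|z|^{-1}),\qquad \pa^{\beta}(\Phi(z)-z)=O(|z|^{-1-|\beta|})$$
for every multi-index $\beta$. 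Since $\pa D$ is smooth, Kellogg's theorem extends $\Phi$ to a smooth diffeomorphism $\overline{D}\to \overline{\R^{2}\setminus B_{R}(z_{0})}$. After a rigid translation of the ambient plane we identify $B_{R}(z_{0})$ with $B_{R}$.

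Next, fix $\chi\in C^{\infty}(\R)$ with $\chi\equiv 1$ on $(-\infty,1]$ and $\chi\equiv 0$ on $[2,\infty)$, and choose a constant $M>0$ large enough that $\pa D\subset B_{M/2}$. Define
$$\mathcal{A}(z):=z+\chi(|z|/M)\bigl(\Phi(z)-z\bigr),\qquad z\in D.$$
Then $\mathcal{A}=\Phi$ on $\{|z|\le M\}\cap D$, so in particular $\mathcal{A}(\pa D)=\pa B_{R}$, while $\mathcal{A}(z)=z$ for $|z|\ge 2M$, which supplies the required identity behavior for $r\gg 1$. On the annulus $\{M\le |z|\le 2M\}$, the decay estimates for $\Phi-\mathrm{id}$ together with $|\pa^{\beta}\chi(|z|/M)|\les M^{-|\beta|}$ yield
$$\|D\mathcal{A}(z)-I\|\les M^{-2}<\tfrac{1}{2}$$
for $M$ sufficiently large, so $D\mathcal{A}$ is invertible everywhere on $D$ and $\mathcal{A}$ is a smooth local diffeomorphism.

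To upgrade to a global diffeomorphism I would argue as follows. The map $\mathcal{A}$ is proper, since $\mathcal{A}(z)=z\to\infty$ as $z\to\infty$ and $\mathcal{A}=\Phi$ near $\pa D$, so it is a proper local diffeomorphism between connected manifolds, hence a covering map. Both $D$ and $\R^{2}\setminus\overline{B_{R}}$ deformation retract onto a circle, so $\pi_{1}\cong\Z$ in each case. The loop $\{|z|=3M\}$ generates $\pi_{1}(D)$ and is fixed pointwise by $\mathcal{A}$, so $\mathcal{A}_{*}$ is the identity on $\pi_{1}$; thus the covering has degree one and $\mathcal{A}$ is a bijection, finishing the construction. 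The principal technical obstacle is precisely this global bijectivity step: invertibility of $D\mathcal{A}$ is immediate from the quantitative decay $\Phi(z)-z=O(|z|^{-1})$, but passing from local to global requires the covering/degree argument, where the same decay is what lets us identify $\mathcal{A}$ with the identity on a large loop and read off the degree.
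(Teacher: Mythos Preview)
Your strategy---exterior Riemann map followed by a cutoff interpolation to the identity---is the paper's strategy as well; the paper just carries it out after inverting to the interior (applying the Riemann mapping theorem to $\check D\cup\{0\}$ with $\mathcal B(0)=0$, $\mathcal B'(0)=1$, cutting off near $0$, then inverting back). There is, however, a small slip in your handling of the constant term $a_0$. After you subtract $a_0$, the target becomes the exterior of $B_R(-a_0)$, and the ``rigid translation of the ambient plane'' you then invoke to recenter destroys the identity-at-infinity property you just built: translating by $a_0$ on the target side gives back $\Phi_0$, which is \emph{not} the identity for $|z|\ge 2M$. The simplest repair is not to subtract $a_0$ at all. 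Keep $\Phi=\Phi_0:D\to\R^2\setminus\overline{B_{R_0}}$ and accept $\Phi(z)-z=a_0+O(|z|^{-1})$; then on the transition annulus one has $\|D\mathcal A-I\|\les M^{-1}$ (rather than $M^{-2}$), which is still $<1/2$ for $M$ large, the image of the annulus stays outside $\overline{B_{R_0}}$ since $|\mathcal A(z)|\ge M-|a_0|-O(M^{-1})$, and the rest of your argument goes through verbatim with the target disk centered at the origin as required. (This is exactly what the paper's normalization $\mathcal B(0)=0$ is buying on the inverted side.)

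On the global bijectivity step the two arguments genuinely diverge. The paper works by hand in polar coordinates: it shows that the image curves $\Gamma_r=\mathcal H(\partial B_r)$ have winding number $1$ about the origin (from $\varphi_\theta=1+O(\delta)$) and are pairwise disjoint (a short contradiction argument using $\varphi_\theta\in[3/4,5/4]$, $|\varphi_r|\le C$ together with the local diffeomorphism property), then reads off surjectivity from the monotone nesting $\Gamma_{r_1}^{\circ}\subset\Gamma_{r_2}^{\circ}$ for $r_1<r_2$. Your covering-space argument is cleaner and more conceptual: properness plus the fixed generating loop at $|z|=3M$ immediately forces a one-sheeted cover. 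Both routes rest on the same quantitative closeness of the interpolated map to the identity; yours trades the explicit curve-tracking for a topological one-liner.
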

With the help of this lemma, we could translate the problem
\eqref{1} in the domain $(D,\gm)$, into the corresponding problem for $(\Om, \tilde{\gm})$, by change of variables with the new unknown function  $w(x)=u(\mathcal{A}^{-1}(x)):\Om\to \R$. Thus, whenever we have
\eqref{H1} for $(\Om,\tilde{\gm})$,
the lifespan estimates
 \eqref{lifespan1} apply for  $(D,\gm)$.

In particular, the result applies for
any exterior domains  to
smooth Jordan curves.

 More precisely, let $\mathcal{K}\subset \R^2$ be a domain interior to a smooth Jordan curve, (that is, $\mathcal{K}$ is a nonempty, open, bounded, smooth, simply connected domain), we consider the following 
  nonlinear wave equations with obstacle $\mathcal{K}$
\begin{equation}\label{nlw2}
\left \{
\begin{aligned}
&u_{tt}-\Delta u=|u|^p,~~~t>0,~~x\in D=\R^2\backslash \bar{ \mathcal{K}},\\
&u(0, x)=\varepsilon u_0(x),~~ u_t(0, x)=\varepsilon u_1(x),~~x\in D,\\
&u|_{\partial   \mathcal{K} }=0\ .\\
\end{aligned} \right.
\end{equation}
With the help of the transform $w(x)=u(\mathcal{A}^{-1}(x)):\Om\to \R$, we need only to consider the  corresponding problem for $(\Om, \tilde{\gm}_0)$.
Thanks to Lemma \ref{thm-diffeo}, $\tilde{\gm}_0$ is a compact perturbation of $\gm_0$, which allows us to apply
Theorem \ref{thm2} to obtain the following result.
\begin{thm}\label{thm3}
Let  $\mathcal{K}\subset \R^2$ be nonempty bounded smooth simply connected domain.
Consider \eqref{nlw2} with initial data \eqref{hs2}. Then we have the same
lifespan estimates
 \eqref{lifespan1} for energy solutions.
\end{thm}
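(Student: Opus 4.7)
The plan is to reduce Theorem~\ref{thm3} to Theorem~\ref{thm2} via Lemma~\ref{thm-diffeo}. Let $\mathcal{A}: D\to \Om = \R^{2}\backslash\overline{B_R}$ be the boundary-preserving diffeomorphism, which is the identity for $r\gg 1$, and set $w(t,x) = u(t,\mathcal{A}^{-1}(x))$ on $[0,T)\times \Om$. I define $\tilde{\gm}_{0} := (\mathcal{A}^{-1})^{*}\gm_{0}$, the pushforward of the flat metric, so that $\mathcal{A}: (D,\gm_{0}) \to (\Om,\tilde{\gm}_{0})$ is an isometry. By the diffeomorphism invariance of the Laplace--Beltrami operator, $\Delta u = (\Delta_{\tilde{\gm}_{0}} w)\circ\mathcal{A}$, so \eqref{nlw2} is equivalent to $w_{tt} - \Delta_{\tilde{\gm}_{0}} w = |w|^{p}$ on $\Om$ with $w|_{\pa\Om} = 0$ and initial data $\e(\tilde{u}_{0},\tilde{u}_{1}) := \e(u_{0}\circ\mathcal{A}^{-1}, u_{1}\circ\mathcal{A}^{-1})$.

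Next I would verify that $(\Om,\tilde{\gm}_{0})$ fits the hypotheses of Theorem~\ref{thm2}. Since $\mathcal{A}$ is the identity for $r\gg 1$, the tensor $\tilde{\gm}_{0} - \gm_{0}$ is smooth and compactly supported on $\Om$. Accordingly I would write $\tilde{\gm}_{0} = \gm_{1} + \gm_{3}$ with $\gm_{1} := \gm_{0}$ (so that $K\equiv 1$ and \eqref{dl2} holds trivially) and $\gm_{3} := \tilde{\gm}_{0} - \gm_{0}$, whose compact support immediately yields the exponential decay bound \eqref{dlfjia}. Because $\mathcal{A}$ is the identity at infinity and a diffeomorphism elsewhere, it preserves non-negativity, non-triviality, and compactness of supports; hence $(\tilde{u}_{0},\tilde{u}_{1})$ satisfies \eqref{hs2} on $\Om$ after possibly enlarging $R_{0}$.

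Applying Theorem~\ref{thm2} to the transformed problem then yields \eqref{lifespan1} for $w$, and since $\mathcal{A}$ is time-independent and global the lifespan transfers verbatim to $u$. The only real subtlety is to ensure that the energy class $C_{t}H^{1}_{0}\cap C^{1}_{t}L^{2}$ is preserved under the change of variables: this is immediate because $\mathcal{A}$ and $\mathcal{A}^{-1}$ have bounded derivatives of all orders (they agree with the identity outside a compact set), so the $H^{1}_{0}$ and $L^{2}$ norms on $D$ and $\Om$ are equivalent and weak solutions pull back to weak solutions. Thus the main conceptual point of the argument is simply the identification of $\tilde{\gm}_{0}$ as an admissible perturbation of the flat metric; no new test-function or blow-up analysis is required.
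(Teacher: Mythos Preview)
Your proposal is correct and follows exactly the approach the paper itself takes: use Lemma~\ref{thm-diffeo} to transport the problem from $(D,\gm_0)$ to $(\Om,\tilde{\gm}_0)$ via $w=u\circ\mathcal{A}^{-1}$, observe that $\tilde{\gm}_0$ is a compact (hence exponential) perturbation of $\gm_0$, and then invoke Theorem~\ref{thm2}. If anything, you have spelled out more of the routine verifications (preservation of the data class \eqref{hs2} and of the energy solution class under the diffeomorphism) than the paper does.
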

\begin{rem}
When the spatial dimension is not greater than $4$,
all of the previous blow-up results and lifespan estimates for exterior problem with critical power heavily rely on the assumption that
the obstacle is a  ball
(see  \cite{Lai2},  \cite{Lai1}, \cite{SW}), under which they can construct some special test functions explicitly. In contrast, our results hold for very general obstacle in $2$-D.
\end{rem}

\subsubsection*{Outline} Our paper is organized as follows. In the next section, we sketch the proof of local well-posedness for the energy solutions, by a  standard energy argument. In particular,
it shows the finite speed of propagation \eqref{thm1step1a}, for the solution.
In Section \ref{sec:1stTest},
 we prove the existence of test function $\phi_{0}$, Lemma \ref{phi_0}, for
  the Dirichlet problem of the Laplace equation on $(\Om, \gm)$.
With the help of $\phi_{0}$, as well as the hypothesis \eqref{H1}, in Section \ref{sec:AE4}, we present  the proof of Theorem \ref{thm1}.
In Section \ref{sec:AE3}, we show that the hypothesis \eqref{H1} holds, at least when the metric $\gm$ is exponential perturbation of a spherically symmetric, long range asymptotically Euclidean metric $\gm=\gm_1+\gm_3$.
At last, in Section \ref{sec:diffeo},   with the help of the Riemann mapping theorem, we prove Lemma \ref{thm-diffeo}, which enables us to reduce the problems with general obstacles to the problem exterior to a disk (keeping the metric near the spatial infinity).

\section{Local well-posedness}
Before going to the proof of blow-up results, we first show the local
well-posedness for problem \eqref{1}, based on energy estimates.
By multiplying the equation in \eqref{1} with $\partial_tu$, we get the energy estimate
\begin{equation}\label{energyineq}
\begin{aligned}
\frac12\frac{d}{dt}\int_{\Omega}\left(u_t^2+g^{ij}(x)u_{x_i}u_{x_j}\right)dV_{\gm}=\int_{\Omega}|u|^pu_tdV_{\gm},\\
\end{aligned}
\end{equation}
where
$dV_\gm=\sqrt{\det (\gm_{jk}(x))} dx$ is the volume form with respect to the metric.
Due to the assumption of $g^{ij}(x)$ in \eqref{unelp}, this implies
\begin{equation}\label{energyineq1}
\begin{aligned}
\|\partial u\|_{L^{\infty}L^2([0, T)\times \Omega)}\les \ \e+T\||u|^p\|_{L^{\infty}L^2([0, T)\times \Omega)}.\\
\end{aligned}
\end{equation}
On the other hand, it is easy to get
\begin{equation}\label{energyineq2}
\begin{aligned}
\|u\|_{L^{\infty}L^2([0, T)\times \Omega)}&\les \ \|u_0\|_{L^2(\Omega)}+T\|u_t\|_{L^{\infty}L^2([0, T)\times \Omega)}\\
&\les \ \e(1+T)+T^2\||u|^p\|_{L^{\infty}L^2([0, T)\times \Omega)}.\\
\end{aligned}
\end{equation}
It then follows by combining \eqref{energyineq1} and \eqref{energyineq2} that
\begin{equation}\label{energyineq3}
\begin{aligned}
\|u\|_{L^{\infty}H^1([0, T)\times \Omega)}\les \ \e(1+T)+(T+T^2)\||u|^p\|_{L^{\infty}L^2([0, T)\times \Omega)}.\\
\end{aligned}
\end{equation}
Recalling that
\[
H^1(\Omega)\hookrightarrow L^{2p}(\Omega),~~~1<p<\infty,
\]
with the help of all these estimates, a standard contraction mapping argument yields the desired local well-posed result.

In particular, as the solution is obtained through iteration,
we see that the solution enjoys the
 finite speed of propagation:
\begin{equation}\label{thm1step1a}
\supp \ u(t)\subset \left\{x\in\Om: \int_R^{r}K(\tau)d\tau\le t+R_1\right\}
:=D_1(t)\subset\left\{x: r\le \frac{t+R_{2}}{\delta_0}\right\},
\eneq
for any data satisfying \eqref{hs2},  where $ R_{2}> R_1=\int_{R}^{R_{0}}K(\tau)d\tau>0$.

\section{Test function for the Dirichlet problem}\label{sec:1stTest}
In this section, we consider the Laplace equation posed on the asymptotically Euclidean exterior domain $(\Om, \gm)$, that is,
Lemma \ref{phi_0}.

We observe that, for the purpose of the lemma, we need only to prove the
following weaker estimate
$$\phi_0(x)\simeq \ln r, \ r\gg 1\ .$$
  Actually,
  by the strong maximum principle,
we know that  $\phi_0(x)$ is positive everywhere in $\Om$.
Moreover, by  Hopf's lemma, we have
 $$\pa_r\phi_0(r,\theta)|_{r\rightarrow R_{+}}>0\ ,$$
 which gives us that
 $$\phi_0(r,\theta)\sim r-R\sim \ln  {r}/R\ ,$$
 for any $R\le r\les 1$.

\subsection{Kelvin transformation}
At first,
inspired by the classical Kelvin transform,
 we use the spatial inversion  to introduce the conformal
compactization. More specifically, we introduce new coordinate in the
${\bar{B}_{1/R}}\backslash\{0\}$ for $\Om$
\beeq\label{ktrans}
\Phi(x)=\phi_0( |x|^{-2}x)=\phi_0(x^*)\ , x\in {\bar{B}_{1/R}}\backslash\{0\}\ .\eneq
Let $ds^2=\gm_{jk}(x^{*})dx^{*j}dx^{*k}=\tilde{h}_{jk}(x)dx^jdx^k$, where $x=r\om$, $\gm_{jk}(x^{*})=\de_{jk}+f(x^{*})\om_j\om_k+\gm_{2,jk}(x^{*})$ and $f(x^{*})=K^2(x^{*})-1$. By \eqref{dl2}-\eqref{eq-g2-ae2}, we have
\beeq
\label{jian1}
f(x^{*})=\mathcal{O}(|x|^{\rho_1})
, \gm_{2,jk}(x^{*})=\mathcal{O}(|x|^{\rho_2}), \ \rho_1\in (0, 1], \rho_2>1\ ,
\eneq
as $x\to 0$.
As$$\frac{\pa x^{*j}}{\pa x^k}=\frac{\de^{jk}-2\omega^j\omega^k}{r^2}
\ ,$$
we get
$$\tilde{h}_{jk}(x)=\frac{1}{r^4}
(\de^{ji}-2\omega^j\omega^i)
\gm_{il}(x^{*})
(\de^{lk}-2\omega^l\omega^k):=\frac{h_{jk}}{r^4}\ ,$$
where
\begin{align}
\nonumber
h_{jk}(x)&=(\de^{ji}-2\om^j\om^i)\gm_{il}(x^{*})(\de^{lk}-2\om^{l}\om^{k})\\ \label{htrans}
&=\de_{jk}+f(x^{*})\om_j\om_k+\gm_{2,il}(x^{*})(\de^{ji}-2\om^j\om^i)(\de^{lk}-2\om^{l}\om^{k})\ .
\end{align}
Then we obtain
\beeq\label{ktrans2}
\De_{\gm}\phi_0(x^{*})=\De_{\tilde{h}}\Phi(x)=r^{4}\De_{h}\Phi(x)\ .
\eneq
In particular, if we have $\gm=\gm_1$, there is $h_0$ such that
\beeq
\label{ktans3}
\De_{\gm_1}\phi(x^{*})=r^{4}\De_{h_0}\Phi_0(x)\ , h_{0,jk}(x)=\de_{jk}+f(x^{*})\om_j\om_k\ .
\eneq

\subsection{Test function for $\gm_1$}
Considering
$$\Delta_{\gm_1} \phi=0, x\in  \Om;\ \phi=0, x\in \pa \Om\ ,
$$
it is easy to find a
spherically symmetric
solution
\beeq\label{eq-key-phi0}\phi(r)=\int_R^r \frac{K(s)}{s}ds\simeq \ln \frac rR\ ,\eneq
as
\beeq\label{eq-gm1}
\Delta_{\gm_1}=K^{-1}r^{-1}\pa_{r}  K^{-1} r\pa_{r}+r^{-2}\pa_\theta^2 \ .\eneq

By \eqref{ktans3}, we have
\beeq
\label{jian2}
\Phi_0(x)=\phi(\frac{x}{|x|^2})=\phi(\frac{1}{r})=\int_R^{\frac{1}{r}} \frac{K(s)}{s}ds\eneq
satisfying
\beeq
\begin{cases}
\De_{h_0}\Phi_0=0\ , x\in B_{1/R}\backslash \{0\},\\
\Phi_0=0\ , x\in \pa B_{1/R}\ .
\end{cases}
\eneq

\subsection{Proof of Lemma \ref{phi_0}}By \eqref{ktrans}-\eqref{ktrans2}, we are reduced to finding a solution in the region
$B_{1/R}\backslash\{0\}$
\beeq
\begin{cases}
\De_{h}\Phi=0\ , x\in B_{1/R}\backslash\{0\}, \\
\Phi=0\ , x\in \pa B_{1/R}
\end{cases}
\eneq
satisfying $\Phi(x)\sim \ln \frac{1}{r}$ near $r=0$. Let $u(x)=\Phi(x)-\Phi_0(x)$, it remains to construct a solution $u\in L^\infty(B_{1/R}\backslash\{0\})$, due to the fact that $\Phi_0\sim \ln (1/r)$ near $r=0$. Concerning $u$, it satisfies
\beeq
\nonumber
\begin{cases}
\De_{h}u=(\De_{h_0}-\De_{h})\Phi_0\ , x\in B_{1/R}\backslash\{0\} \ ,\\
u=0\ , x\in \pa B_{1/R}\ ,
\end{cases}
\eneq
and we would like to view it as the Dirichlet problem in the ball
$B_{1/R}$.

For that purpose, we  set $h_{jk}(0)=\de_{jk}$ so that it is continuous in $B_{1/R}$, in view of
\eqref{jian1}-\eqref{htrans}.
Moreover, we claim that
\beeq
\label{claim1}
(\De_h-\De_{h_0})\Phi_0=\mathcal{O}(r^{\rho_2-2})\ ,
\eneq
which could be written in the form $\pa_{1}F$ for some $F=\mathcal{O}(r^{\rho_2-1})\in L^{\infty}(B_{1/R})$.
With the help of the claim, by standard elliptic existence theorems, there is a unique solution $u\in H_{0}^{1}(B_{1/R})$. In addition,
as the equation is of divergence form,
$$\pa_j (|h|^{1/2}h^{jk} \pa_k u)=|h|^{1/2}
(\De_h-\De_{h_0})\Phi_0=\pa_{1}F\ ,$$
with $F\in  L^{\infty}(B_{1/R})$,
an application of Meyer's theorem (see, e.g., Taylor \cite[Chapter 14, Proposition 12.2]{T11}) gives us
$u\in W^{1,q}(B_{1/R})$ for some $q>2$, which in turn gives us the desired result
$u\in L^\infty(B_{1/R})$.

We are left to give the proof of the claim \eqref{claim1}.
By calculation, we have
$$(\De_h-\De_{h_0})\Phi_0=
(h^{jk}-h_{0}^{jk})
\pa_{j}\pa_{k}\Phi_0+
[|h|^{-1/2}\pa_j (|h|^{1/2}h^{jk})
-|h_0|^{-1/2}\pa_j (|h_0|^{1/2}h_{0}^{jk})]
 \pa_k\Phi_0\ .
$$
By \eqref{jian2} and \eqref{dl2},  we get
$$\pa_j\Phi_0=\mathcal{O}(r^{-1}), \pa_j\pa_k\Phi_0=\mathcal{O}(r^{-2})\ .$$
Similarly, by  \eqref{dl2}-\eqref{eq-g2-ae2}, \eqref{htrans} and \eqref{ktans3}, we have
$$
|\nabla^\be (h_{jk}-h_{0,jk})|
+|\nabla^\be (h^{jk}-h_0^{jk})|
+|\nabla^\be (|h|-|h_{0}|)|
=\mathcal{O}(r^{\rho_2-|\be|}), |\be|=0,1\ .$$
In summary,
it is easy to see that
$$|(\De_h-\De_{h_0})\Phi_0|\les \sum_{j,k}
\sum_{1\le |\be|\leq 2, |\al|+|\be|=2}
|\nabla^{\al}\gm^{jk}_{2}(x^{*})||\nabla^{\be}\Phi_0|=\mathcal{O}(r^{\rho_2-2})\ ,$$
which completes the proof.

\subsection{An integral estimate}
With $\phi_0$ and its asymptotic behavior,
together with \eqref{H1} for $\phi_{\la}$,
 we will need the following
\begin{lem}
\label{asyphi0la}
Let $\la_{1}\in(0,\la_0)$, then we have
\begin{equation}\label{phi0la}
\begin{aligned}
\int_{\Omega\cap (\int_R^rK(s)ds\le t+R_1)}\phi_0^{-\frac{1}{p-1}}e^{-p'\la_1 t}\phi_{\la_1}^{p'}dV_\gm\lesssim \left(\ln(t+1)\right)^{-\frac{1}{p-1}}(t+1)^{1-\frac{p'}{2}},
\end{aligned}
\end{equation}
where $p'=\frac{p}{p-1}$ and the implicit constant may depend on $\la_{1}$.
\end{lem}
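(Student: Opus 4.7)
The plan is to split the integration domain at $r = \lambda_1^{-1}$, in accordance with the piecewise definition of $\phi_{\lambda_1}$ in \eqref{H1}, and to use Lemma \ref{phi_0} for $\phi_0$ on each piece.

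On the bounded piece $\Omega\cap\{R\le r\le\lambda_1^{-1}\}$, the volume is finite, so the only issue is the possible singularity at $r=R$. By Hopf's lemma (noted right after the statement of Lemma \ref{phi_0}) one has $\phi_0\sim r-R$, and by the small-$r$ branch of \eqref{H1} also $\phi_{\lambda_1}\sim r-R$. Hence
\begin{equation*}
\phi_0^{-1/(p-1)}\phi_{\lambda_1}^{p'}\simeq (r-R)^{p'-1/(p-1)}=(r-R),
\end{equation*}
which is integrable near $r=R$; elsewhere on this piece both test functions are comparable to positive constants. The resulting contribution is $\lesssim e^{-p'\lambda_1 t}$, which is absorbed in the target bound for $t$ large (and the target bound is trivial for $t$ bounded).

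For the exterior piece $\Omega\cap\{\lambda_1^{-1}\le r,\ \int_R^r K(\tau)\,d\tau\le t+R_1\}$, I apply $\phi_0\simeq \ln(r/R)$ from \eqref{asyphi0} and the large-$r$ branch of \eqref{H1}, which (using $\langle r\lambda_1\rangle\sim r$ for fixed $\lambda_1$) gives
\begin{equation*}
\phi_{\lambda_1}^{p'}\lesssim r^{-p'/2}\,e^{p'\lambda_1\int_R^r K(\tau)\,d\tau}.
\end{equation*}
In polar coordinates $dV_\gm\sim K(r)\,r\,dr\,d\theta$, so after integrating out $\theta$ and changing variable to $s=\int_R^r K(\tau)\,d\tau$ (which by \eqref{dl2} satisfies $s\sim r$ and $ds=K(r)\,dr$), the contribution of this piece is bounded by
\begin{equation*}
C\int_{s_0}^{t+R_1} s^{\,1-p'/2}(\ln s)^{-1/(p-1)}\,e^{p'\lambda_1(s-t)}\,ds
\end{equation*}
for some fixed $s_0>0$. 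A single integration by parts (equivalently, the observation that since $p'\lambda_1>0$ the integrand is concentrated at the upper endpoint) yields $\lesssim (t+1)^{1-p'/2}(\ln(t+1))^{-1/(p-1)}$, completing the estimate.

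The only delicate point is the concentration step in the last paragraph: the exponential growth of $\phi_{\lambda_1}$ along the ``light cone'' $r\sim t$, combined with the support constraint $\int_R^r K\le t+R_1$, localizes the mass of the integral to a neighborhood of $s\sim t$, so the volume factor $s$ pairs with $s^{-p'/2}$ to produce the advertised $(t+1)^{1-p'/2}$ rather than the naive $(t+1)^{2-p'/2}$ one would obtain by dropping the exponential. The rest of the argument is a routine combination of the asymptotics already at hand.
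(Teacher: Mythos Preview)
Your proof is correct and follows essentially the same route as the paper: split at $r=\lambda_1^{-1}$, apply the asymptotics \eqref{asyphi0} and \eqref{H1} on each piece, and on the outer piece exploit the exponential concentration near $\int_R^r K = t+R_1$. The only cosmetic differences are that the paper makes the concentration step explicit via a further sub-split at $\int_R^r K=(t+R_1)/2$ rather than your integration-by-parts phrasing, and on the inner piece it bypasses the separate appeal to Hopf's lemma by observing directly from \eqref{H1} and \eqref{asyphi0} that $\phi_0^{-1/(p-1)}\phi_{\lambda_1}^{p'}\lesssim \phi_0\sim\ln(r/R)$.
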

\begin{proof}
Let us begin with the region with
$r\le \la_1^{-1}$,
in that case,  by \eqref{asyphi0} and the first estimate in \eqref{H1},
we have
$\phi_0^{-\frac{1}{p-1}}\phi_{\la_1}^{p'}\les \phi_{0}(r)
(\phi_0(\la_1^{-1}))^{-\frac{p}{p-1}}\les \ln(r/R)$ and
 the integral is controlled by
$$\int_{R\le r\le \la_1^{-1}}\phi_0(r)e^{-p'\la_1t}rdr\
\les\  e^{-p'\la_1t}.$$

For the remained case
$r\ge \la_1^{-1}$,
we could use \eqref{asyphi0} and the second estimate in \eqref{H1}.
Thus we have
$$\int_{\{r\ge \la_1^{-1}\}\cap \{\int_R^rK(s)ds\le \frac{t+R_1}{2}\}}\phi_0^{-\frac{1}{p-1}}e^{-p'\la_1 t}\phi_{\la_1}^{p'}dV_\gm\les \int_{r\les t+R_2} e^{-\frac{p'\la_1t}2} rdr
\les e^{\frac{-p'\la_1t}{2}}(t+1)^2\ ,$$
and
\begin{equation}\label{I3}
\begin{aligned}
&\int_{\frac{t+R_1}{2}\le\int_R^rK(s)ds\le t+R_1}\phi_0^{-\frac{1}{p-1}}e^{-p'\la_1 t}\phi_{\la_1}^{p'}dV_\gm\\
\les&\left(\ln(t+1)\right)^{-\frac{1}{p-1}}\int_{\frac{t+R_1}{2}\le\int_R^rK(s)ds\le t+R_1}e^{-p'\la_1 t+p'\la_1\int_R^rK(s)ds}\langle\la_1r\rangle^{1-\frac{p'}{2}}dr\\
\les&\left(\ln(t+1)\right)^{-\frac{1}{p-1}}(t+1)^{1-\frac{p'}{2}}.
\end{aligned}
\end{equation}
In summary, we obtain  \eqref{phi0la}.
\end{proof}

\section{Proof of Theorem \ref{thm1}}\label{sec:AE4}

In this section, we are devoted to the proof for Theorem \ref{thm1}, under the assumption \eqref{H1}.

\subsection{
Subcritical case
}

Let $\eta(t)\in C^{\infty}([0, \infty))$ be a decreasing function  satisfying
\[
\eta(t)=
 \left\{
 \begin{array}{ll}
 1 &  t\le\frac12,\\
 0 &  t\ge 1,\\
 \end{array}
 \right.
\eta^*(t)=\eta(t)\chi_{[1/2, 1]}(t)
\] and
\[
\eta_T(t)=\eta(t/T),
\eta^*_T(t)=\eta^*(t/T),
\ T\in (1, T(\ep)).
\]

As $u\in C_{t}H^1_0\cap C_{t}^1 L^2$ is the energy solution to
 \eqref{1},
 we have
 finite speed of propagation \eqref{thm1step1a}
and
  $u\in C_{t}^2 H^{-1}$
based on the equation.
Using
 $\eta_T^{2p'}(t)\phi_0(x)$
 as a test function, where $\phi_0(x)$ is the test function in Lemma \ref{phi_0}, we obtain
\begin{eqnarray*}
&&\<|u|^p, \eta_T^{2p'}(t)\phi_0(x)\>_\gm \\
& = & \<u_{tt}-\Delta_\gm u, \eta_T^{2p'}(t)\phi_0(x)\>_\gm \\
 & = &
\frac{d}{dt}( \<u_t, \eta_T^{2p'}\phi_0(x)\>_\gm
-\<u, \pt \eta_T^{2p'}\phi_0(x)\>_\gm)+\<u, \Box_\gm  \big(\eta_T^{2p'}(t)\phi_0(x)\big)\>_\gm
\\
 & = &
\frac{d}{dt}( \<u_t, \eta_T^{2p'}\phi_0(x)\>_\gm
-\<u, \pt \eta_T^{2p'}\phi_0(x)\>_\gm)+\<u,\pt^2 \eta_T^{2p'}(t)\phi_0(x)\>_\gm
\end{eqnarray*}
 where
$\Box_\gm= \pt^2-\Delta_\gm$,
 $\<u,v\>_\gm$ denotes the dual relation between distribution $u$ and test function $v$, which agrees with
 $\int_{\Om} u(x)v(x) \sqrt{\det (\gm_{jk}(x))} dx
 =\int_{\Om} u(x)v(x) dV_\gm
 $ for usual functions $u,v$.
  Integrating over $[0, T]$, and observing that
  $\eta_T(T)=  \pt\eta_T(0)=
  \pt\eta_T(T)=0$,
  we get
\beeq\label{thm1step1}\int_0^T\int_{\Omega}|u|^p\eta_T^{2p'}\phi_0dV_\gm dt
+\e \int_{\Omega}u_1\phi_0dV_\gm=\int_0^T\int_{\Omega}u\partial_t^2\eta_T^{2p'}\phi_0dV_\gm dt\ .
\eneq


Noting that
\[
\partial_t^2\eta_T^{2p'}=2p'(2p'-1)\eta_T^{2p'-2}(\partial_t\eta_T)^2
+2p'\eta_T^{2p'-1}(\partial_t^2\eta_T),\]
since $(\partial_t\eta_T)^2, |\partial_t^2\eta_T|\le CT^{-2}$ for some $C>0$, and $\eta_T\le 1$, then we have
\[
\partial_t^2\eta_T^{2p'}
=\CO(T^{-2}\eta_T^{2p'-2}).
\]
Thus we obtain
\begin{eqnarray*}
 &  & \int_0^T\int_{\Omega}u\partial_t^2\eta_T^{2p'}\phi_0 dV_\gm dt\\
 & \le  & C\left(\int_0^T\int_{\Omega}|u|^p\eta_T^{2p'}\phi_0 dV_\gm dt\right)^{\frac 1p}\left(\int_{\frac T2}^T\int_{ D_1(t)}
 T^{-2p'}
 \phi_0 dV_\gm dt\right)^{\frac {1}{p'}}\\
&\le &CT^{3-2p'}\ln T+\frac13\int_0^T\int_{\Omega}|u|^p\eta_T^{2p'}\phi_0 dV_\gm dt,
\end{eqnarray*}
where we have used H\"{o}lder's inequality, \eqref{asyphi0} and Young's inequality.
Plugging it to  \eqref{thm1step1}, we know that
\begin{equation}\label{thm1step3}
\e \int_{\Omega}u_1(x)\phi_0 dV_\gm +\int_0^T\int_{\Omega}|u|^p\eta_T^{2p'}\phi_0 dV_\gm dt\le CT^{3-2p'}\ln T.
\end{equation}
This inequality implies the lifespan estimate for $1<p<2$ in \eqref{lifespan1}. Actually, from the last inequality we conclude that
 $T^{3-2p'}\ln T\gtrsim \e$ for any $T\in (1,T(\ep))$, which gives us, for
 $1<p<3$,
\begin{equation}\label{lifespan12}
T(\e)\les\ (\varepsilon^{-1}\ln (\varepsilon^{-1}))^{(p-1)/(3-p)}\ .
\end{equation}
 But, comparing to the one
\begin{equation}\label{lifespan1pc2}
\begin{aligned}
T(\e)\les\ \e^{-\frac{2p(p-1)}{2+3p- p^2}},
\end{aligned}
\end{equation}
which we will get for $1<p<p_c(2)$, the lifespan estimate \eqref{lifespan12} is better than \eqref{lifespan1pc2} only for $1<p<2$.

To proceed, we introduce another test function
\[
\eta_T^{2p'}\psi(t, x)=\eta_T^{2p'}e^{-\la_1t}\phi_{\la_1}(x)
\]
to get
\begin{eqnarray*}
&&\int_0^T\int_{\Omega}|u|^p\eta_T^{2p'}\psi(t, x) dV_\gm dt\\
&=&\int_0^T\<u_{tt}-\Delta_\gm u, \eta_T^{2p'}\psi(t, x)\>_\gm dt,\\
&=&
\left.\left(\<u_t, \eta_T^{2p'}\psi(t, x)\>_\gm
-\<u, \pt (\eta_T^{2p'}\psi(t, x))\>_\gm \right)
\right|_{t=0}^T +\int_0^T\<u,  \Box_\gm (\eta_T^{2p'}\psi(t, x))\>_\gm dt\\
&=&
-\e\int_{\Omega}\left(\la_1 u_0(x) +u_1(x)\right)\phi_{\la_1}(x) dV_\gm
\\
&&+\int_0^T\int_{\Omega}u
\psi
\partial_t^2\eta_T^{2p'} dV_\gm dt+2\int_0^T\int_{\Omega}u(\partial_t\eta_T^{2p'})(\pt \psi) dV_\gm dt,
\end{eqnarray*}
which yields
\begin{equation}\label{thm1step5}
\begin{aligned}
&\e\int_{\Omega}\left(\la_1 u_0(x) +u_1(x)\right)\phi_{\la_1}(x) dV_\gm +\int_0^T\int_{\Omega}|u|^p \eta_T^{2p'} e^{-\la_1t}\phi_{\la_1}(x) dV_\gm dt\\
=&\int_0^T\int_{\Omega}ue^{-\la_1t}\phi_{\la_1}(x) (\partial_t^2\eta_T^{2p'}  -2\la_1 \partial_t\eta_T^{2p'}) dV_\gm dt \ .
\end{aligned}
\end{equation}
Noticing that
$$\partial_t^2\eta_T^{2p'}  -2\la_1 \partial_t\eta_T^{2p'}=\CO(T^{-1}(\eta_T^*)^{2p'-2})\ ,$$
As above, by combining H\"{o}lder's inequality and \eqref{phi0la},
the right hand side of  \eqref{thm1step5} is controlled by
\begin{equation}\label{IV}
\begin{aligned}
 & C T^{-1} \int_0^T\int_{\Omega} |u| (\eta_T^*)^{2p'-2}\phi_0^{1/p} \phi_0^{-1/p}e^{-\la_1t}\phi_{\la_1}(x) dV_\gm dt\\
\le &CT^{-1}
\| u (\eta_T^*)^{2p'-2}\phi_0^{1/p} \|_{L^p ([0,T]\times\Om)}
\| \phi_0^{-1/p}e^{-\la_1t}\phi_{\la_1} \|_{L^{p'} (\cup_{t\in [T/2,T]}D_1(t))}
\\
\le &CT^{-1+(2-\frac{p'}{2})\frac{1}{p'}}(\ln T)^{-\frac1p}\left(\int_0^T\int_{\Omega}|u|^p (\eta_T^*)^{2p'} \phi_0 dV_\gm dt\right)^{\frac 1p}\ .
\end{aligned}
\end{equation}

We then conclude from \eqref{thm1step5}, \eqref{IV} and \eqref{thm1step3}  that
\begin{equation}\label{thm1step6}
\e ^pT^{2-\frac p2}\ln T\les \int_0^T\int_{\Omega}|u|^p(\eta_T^*)^{2p'}\phi_0 dV_\gm dt
\les \ T^{3-2p'}\ln T\ ,
\end{equation}
which gives us the desired lifespan estimate  \eqref{lifespan1pc2}  for $1<p<p_c(2)$.

\subsection{
Critical case}

For the lifespan estimate of the critical power, we need one more lemma, which is similar to Lemma 4.2 in \cite{LT1}.
For completeness, we give a proof.
\begin{lem}\label{lembq}
Let $q>0$, $\la_{1}
\in (0,\la_{0})$ and
\[
b_q(t, x)=\int_0^{\la_1}e^{-\lambda t}\phi_{\lambda}(x)\lambda^{q-1}d\lambda\ ,
\]
then we have
\begin{enumerate}
  \item $b_{q}(t, x)$ satisfies following identities
$$\pt b_{q}(t, x)=-b_{q+1}(t, x),\ \ \pt^2 b_{q}(t, x)=b_{q+2}(t, x)=
\Delta_\gm b_q(t, x) \ .$$
  \item For
$x\in D_1(t)$, we have
\begin{equation}\label{bq3}
b_q(t, x)\les\left\{
\begin{array}{ll}
(t+R_1)^{-q}\ &\mbox{if}\ 0<q<\frac 12,\\
(t+R_1)^{-\frac 12}\left(t+R_1+1-\int_R^rK(s)ds\right)^{\frac{1}{2}-q}\ &\mbox{if}\ q>\frac 12.\\
\end{array}
\right.
\end{equation}
and
\begin{equation}\label{bqge3}
b_q(t, x)\gtrsim\frac{\phi_0}{\ln(t+R_1)}(t+R_1)^{-q},~~~\forall q>0.
\end{equation}
\end{enumerate}
\end{lem}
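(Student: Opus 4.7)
The plan is to prove Part (1) by differentiating $b_q$ under the integral sign: from $\partial_t e^{-\lambda t}=-\lambda e^{-\lambda t}$ one immediately reads off $\partial_t b_q=-b_{q+1}$ and $\partial_t^2 b_q=b_{q+2}$, while bringing $\Delta_\gm$ inside the integral and invoking the eigenequation $\Delta_\gm\phi_\lambda=\lambda^2\phi_\lambda$ gives $\Delta_\gm b_q=b_{q+2}$ as well. The uniform-in-$\lambda$ estimates \eqref{H1} justify exchanging differentiation with the $\lambda$-integration.

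For the upper bound \eqref{bq3} I would split $\int_0^{\lambda_1}$ at $\lambda=1/r$. On $[0,1/r]$ the first regime of \eqref{H1} combined with the elementary inequality $|\ln R\lambda|\ge\ln(r/R)$ for $\lambda\le 1/r$ gives $\phi_\lambda\lesssim 1$, and this piece is therefore controlled by $\int_0^\infty e^{-\lambda t}\lambda^{q-1}\,d\lambda\lesssim t^{-q}$, which is absorbed into either branch of \eqref{bq3}. On $[1/r,\lambda_1]$ (empty unless $r\ge \lambda_1^{-1}$) the second regime of \eqref{H1} together with $e^{-\lambda t}e^{\lambda\rho}=e^{-\lambda(t-\rho)}\lesssim e^{-\lambda\tau}$, where $\tau:=t-\rho+R_1\ge 0$ thanks to $x\in D_1(t)$, reduces the matter to bounding
\[
r^{-1/2}\int_{1/r}^{\lambda_1}e^{-\lambda\tau}\lambda^{q-3/2}\,d\lambda.
\]
After the substitution $\mu=\lambda\sigma$ with $\sigma=\tau+1\ge 1$, and using $e^{-\lambda\tau}\lesssim e^{-\lambda\sigma/2}$ once $\tau\ge 1$, this becomes $\sigma^{1/2-q}\int_{\sigma/r}^{\lambda_1\sigma}e^{-\mu/2}\mu^{q-3/2}\,d\mu$, which I would estimate case-by-case on $\sigma/r$: the $\mu$-integral is $O(1)$ when $\sigma/r\lesssim 1$ (for $q>1/2$) or of order $(\sigma/r)^{q-1/2}$ (for $q<1/2$), and it decays like $e^{-\sigma/(4r)}$ when $\sigma/r\gg 1$. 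Combined with $\sigma+r\sim t+R_1$ (from $\rho\sim r$) and the elementary comparison $(1+\sigma/r)^{1/2}e^{-\sigma/(4r)}\lesssim 1$, this upgrades the prefactor $r^{-1/2}$ to $(t+R_1)^{-1/2}$ in the $q>1/2$ branch, and to $r^{-q}\lesssim(t+R_1)^{-q}$ in the $q<1/2$ branch.

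The lower bound \eqref{bqge3} is obtained by restricting the $\lambda$-integral to a window of length $\sim(t+R_1)^{-1}$ around $\lambda=(t+R_1)^{-1}$. On this window $e^{-\lambda t}\gtrsim 1$, and since $r\lesssim t+R_1$ for $x\in D_1(t)$, one has $\lambda r\le 1$, so the first regime of \eqref{H1} applies and $\phi_\lambda\gtrsim \ln(r/R)/\ln(t+R_1)$. Combining with $\lambda^{q-1}\gtrsim(t+R_1)^{1-q}$ and the length of the window, then using $\phi_0\simeq\ln(r/R)$ from Lemma \ref{phi_0}, yields \eqref{bqge3}.

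The main obstacle will be the book-keeping for the upper bound in the regime $r\ll t+R_1$: the pointwise estimate of $\phi_\lambda$ initially supplies only the factor $r^{-1/2}$, and upgrading this to the sharper $(t+R_1)^{-1/2}$ requires extracting just enough exponential decay from $e^{-\lambda\tau}$. The quantitative heart of the argument is the elementary inequality $(1+x)^{1/2}e^{-x/4}\lesssim 1$ for $x\ge 0$, which captures precisely this polynomial-vs-exponential trade.
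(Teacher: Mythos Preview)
Your proposal is correct. Part (1) and the lower bound \eqref{bqge3} match the paper's argument essentially verbatim: the paper also restricts the $\lambda$-integral to a short window $[\delta_1/(2(t+R_1)),\,\delta_1/(t+R_1)]$ (with $\delta_1$ chosen small enough that $r\le\lambda^{-1}$ throughout $D_1(t)$) and invokes the first regime of \eqref{H1} together with \eqref{asyphi0}.

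For the upper bound \eqref{bq3}, however, the paper takes a different and somewhat cleaner route. Rather than splitting the $\lambda$-integral at $\lambda=1/r$, it first records the uniform pointwise bound $\phi_\lambda\lesssim\langle\lambda r\rangle^{-1/2}e^{\lambda\rho}$ (valid across both regimes of \eqref{H1}), and then splits in \emph{position}: either $\rho=\int_R^rK\le(t+R_1)/2$ or $\rho\ge(t+R_1)/2$. In the first case one retains the exponential factor $e^{-\lambda(t+R_1)/2}$ and immediately gets $(t+R_1)^{-q}$; in the second case one has $r\sim t+R_1$ from the outset, so the prefactor $\langle\lambda r\rangle^{-1/2}$ already furnishes $(t+R_1)^{-1/2}$ (for $q>1/2$) or $\langle\lambda(t+R_1)\rangle^{-1/2}$ (for $q<1/2$) with no further work. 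In particular, the ``main obstacle'' you flag---trading exponential decay in $\sigma/r$ for the missing polynomial factor $(r/(t+R_1))^{1/2}$ via $(1+x)^{1/2}e^{-x/4}\lesssim 1$---simply does not arise in the paper's decomposition. Your split-in-$\lambda$ argument is a valid alternative, closer in spirit to a direct stationary-phase-type analysis of the integral, but it does more bookkeeping than necessary here. (A minor point: your bound $\int_0^\infty e^{-\lambda t}\lambda^{q-1}\,d\lambda\lesssim t^{-q}$ for the low-$\lambda$ piece should be stated as $\lesssim(t+R_1)^{-q}$, with the case $t\lesssim 1$ handled by the trivial bound $\int_0^{\lambda_1}\lambda^{q-1}\,d\lambda\lesssim 1$.)
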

\begin{proof} The first part is trivial.
 Concerning \eqref{bqge3}, let $\de_1<\min(\de_{0}R_{1}/R_{2}, \la_1 R_1)$, such that $\frac{\de_1}{t+R_1}\leq \frac{\de_1}{R_1}\leq \la_1$,
 then for any $\la\le\frac{\de_1}{t+R_1}$, we have
 $r\le (t+ R_2)/\de_0<(t+ R_1)/\de_1\le\la^{-1}$
 for $x\in D_1(t)$, so that we could apply  \eqref{H1} and \eqref{asyphi0} to obtain
\begin{equation*}
\begin{aligned}
b_q(t, x)\gtrsim&\int_{\frac{\de_1}{2(t+R_1)}}^{\frac{\de_1}{ t+R_1 }}e^{-\lambda t}\frac{\phi_0(x)}{\ln(\lambda R)^{-1}}\lambda^{q-1}d\lambda\\
\gtrsim&\frac{\phi_0(x)}{\ln(t+R_1)}\int_{\frac{\de_1}{2(t+R_1)}}^{\frac{\de_1}{ t+R_1 }}e^{-\lambda(t+R_1)}\lambda^{q-1}d\lambda\\
\gtrsim&\frac{\phi_0(x)}{\ln(t+R_1)}(t+R_1)^{-q}\int_{\frac{\de_1}{2}}^{\de_1}e^{-\theta}\theta^{q-1}d\theta
\gtrsim&\frac{\phi_0(x)}{\ln(t+R_1)}(t+R_1)^{-q}\ .
\end{aligned}
\end{equation*}

Turning to the upper bound,
 \eqref{bq3}, we know from  \eqref{H1} that
\begin{equation}\label{bqlower2}
b_q(t, x)
\les \int_{0}^{\la_1}e^{-\lambda(t+R_1)}e^{\la \int_R^rK(s)ds}\langle\la r\rangle^{-\frac12}\lambda^{q-1}d\lambda \ .
\end{equation}
If $\int_R^rK(s)\le \frac{t+R_1}{2}$, we get
\begin{equation}\label{bqlower2a}
b_q(t, x)\les \int_{0}^{\la_1}e^{-\frac{\la(t+R_1)}{2}}\la^{q-1}d\la
\les \ (t+R_1)^{-q}\int_0^{\infty}e^{-\la}\la^{q-1}d\la
\les \ (t+R_1)^{-q}.
\end{equation}
For the case $\frac{t+R_1}{2}\le\int_R^rK(s)\le t+R_1$, we have $r\thicksim t+R_1$.
If $q<1/2$,
the estimate for \eqref{bqlower2} becomes
\begin{equation*}
b_q(t, x)
\les\int_{0}^{\la_1}\langle\la(t+R_1)\rangle^{-\frac12}\lambda^{q-1}d\lambda
\les \ (t+R_1)^{-q}
\int_{0}^{\infty} \langle s\rangle^{-\frac12}s^{q-1}d s\les \ (t+R_1)^{-q}\ .
\end{equation*}
It remains to consider the case $q> \frac12$
and
$r\thicksim t+R_1$, for which we have
\begin{eqnarray*}
b_q(t, x)&
\les&   (t+R_1)^{-\frac12} \int_{0}^{\la_1}e^{-\lambda[t+R_1+1-\int_R^rK(s)ds]}\lambda^{q-3/2}d\lambda \\
&\les&
(t+R_1)^{-\frac12}\left(t+R_1+1-\int_R^rK(s)ds\right)^{\frac12-q}
\int_{0}^{\infty}
e^{-\la}\la^{q-\frac32}d\la
\\
&\les&
(t+R_1)^{-\frac12}\left(t+R_1+1-\int_R^rK(s)ds\right)^{\frac12-q}
\ .
\end{eqnarray*}
This completes the proof of \eqref{bq3}.
\end{proof}

For $M\in [2, T]\subset [2, T(\ep))$, we set
\[
Y[|u|^pb_q](M)=\int_1^M\left(\int_0^T\int_{\Omega}|u|^p(t,x)b_q(t,x)(\eta_{\sigma}^*(t))^{2p'} dV_\gm dt\right)\sigma^{-1}d\sigma,
\]
with
$q=\frac12-\frac1p$.
Then from \eqref{bqge3} and \eqref{thm1step6}, we know that
\begin{equation}\label{cristep1}
\begin{aligned}
M\frac{dY}{dM}=&\int_0^T\int_{\Omega}|u|^pb_q(\eta_{M}^*)^{2p'} dV_\gm dt\\
\ge &\int_{M/2}^M\int_{\Omega}|u|^p
\frac{\phi_0}{\ln M}
M^{-q}
(\eta_{M}^*)^{2p'}
 dV_\gm dt\\
\gtrsim&\e^pM^{2-\frac p2}\ln M \frac{M^{\frac1p-\frac12}}{\ln M}=\e^pM^{\frac32-\frac p2+\frac1p}=\e^p,
\end{aligned}
\end{equation}
where we used the fact that $p=p_c(2)$. On the other hand, as in (4.10) in \cite{LT1}, we have
\begin{equation*}
\begin{aligned}
Y(M)
=&\int_1^M
\left(\int_0^T\int_{\Om}b_q|u|^p(\eta_\sigma^*(t))^{2p'}dV_\gm dt\right)\sigma^{-1}d\sigma\\
=&\int_0^T\int_{\Om}b_q|u|^p\int_1^M(\eta_\sigma^*(t))^{2p'}\sigma^{-1}d\sigma dV_\gm dt\\
=&\int_0^T\int_{\Om}b_q|u|^p\int_{\frac tM}^t(\eta^*(s))^{2p'}s^{-1}ds dV_\gm dt\\
= &\int_0^T\int_{\Om}b_q|u|^p\int_{
\max(\frac tM,
\frac 1 2)}^1 (\eta(s))^{2p'}s^{-1}ds dV_\gm dt\\
\le&\int_{0}^T\int_{\Om}b_q|u|^p(\eta (t/M))^{2p'}\int_{\frac 12}^1 s^{-1}ds dV_\gm dt
\le 2 \int_0^T\int_{\Om}\eta_M^{2p'}b_q|u|^pdV_\gm dt\ .
\end{aligned}
\end{equation*} As $(\partial_t^2-\Delta_\gm )b_q=0$, we get
\begin{equation}\label{cristep2}
\begin{aligned}
Y(M)\les& \int_0^T\<|u|^p, \eta_M^{2p'}b_q\>_\gm dt
=\int_0^T\< \partial_t^2u-\Delta_\gm u, b_q\eta_{M}^{2p'}\>_\gm dt\\
=&-\left(\e\int_{\Omega}u_1(x)b_q(0, x) dV_\gm +\e\int_{\Omega}u_0(x) b_{q+1}(0, x) dV_\gm \right)\\
&+\int_0^T\int_{\Omega}u (\partial_t^2
-\Delta_\gm)(b_q\eta_M^{2p'})  dV_\gm dt\\
\le& \int_0^T\int_{\Omega} u\left(2\partial_tb_q\partial_t\eta_M^{2p'}+b_q\partial_t^2\eta_M^{2p'}\right)  dV_\gm dt:= I_1+I_2.\\
\end{aligned}
\end{equation}

By \eqref{bq3} and \eqref{bqge3}, we know that
\begin{equation*}
\begin{aligned}
&\int_{\frac M2}^M\int_{D_1(t)}b_q^{-\frac{1}{p-1}}b_{q+1}^{\frac{p}{p-1}} dV_\gm dt\\
\les&
\int_{\frac M2}^M\int_{D_1(t)}
b_q^{-\frac{1}{p-1}}b_{q+1}^{\frac{p}{p-1}}rdrdt\\
\les&\int_{\frac M2}^M\int_{D_1(t)}\left(\frac{\ln(t+R_1)}{\ln  r/R}\right)^{\frac{1}{p-1}}\frac{(t+R_1)^{\frac{1}2-\frac1{p(p-1)}}}
{t+R_1+1-\int_R^rK(s)ds}drdt\\
\les&\int_{\frac M2}^M(\ln(t+R_1))^{\frac1{p-1}}(t+R_1)^{\frac{1}2-\frac1{p(p-1)}}\int_{\int_R^rK(s)ds\le\frac{t+R_1}{2}}r^{-1}(\ln
( r/R))^{-\frac{1}{p-1}}drdt\\
&+\int_{\frac M2}^{M}(t+R_1)^{\frac{1}2-\frac1{p(p-1)}}\int^{t+R_1}_{\frac{t+R_1}{2}}(t+R_1+1-
s)^{-1}dsdt\\
\les & M^{\frac{3p(p-1)-2}{2p(p-1)}}\ln M
=M^{\frac{p}{p-1}}\ln M
\ ,
\end{aligned}
\end{equation*}
then $I_1$, $I_2$ can be estimated as
\begin{equation*}
\begin{aligned}
I_1\les& \ M^{-1}\left(\int_0^T\int_{\Omega}|u|^pb_q (\eta_M^*)^{2p'} dV_\gm dt\right)^{\frac1p}
\left(\int_{\frac{M}{2}}^{M}\int_{D_1(t)}b_q^{-\frac{1}{p-1}}b_{q+1}^{\frac{p}{p-1}} dV_\gm dt\right)^{\frac{p-1}p}\\
\les&\ \left(\ln M\right)^{\frac{p-1}p}\left(\int_0^T\int_{\Omega}|u|^pb_q (\eta_M^*)^{2p'}  dV_\gm dt\right)^{\frac1p}\\
\end{aligned}
\end{equation*}
and
\begin{equation*}
\begin{aligned}
I_2\les&\  M^{-2}\left(\int_0^T\int_{\Omega}|u|^pb_q\eta_M^{*2p'} dV_\gm dt\right)^{\frac1p}
\left(\int_{\frac{M}{2}}^{M}\int_{D_1(t)}b_q dV_\gm dt\right)^{\frac{p-1}p}\\
\les&M^{-2}\left(\int_0^T\int_{\Omega}|u|^pb_q\eta_M^{*2p'} dV_\gm dt\right)^{\frac1p} \left(\int_{\frac M2}^M\int_{\int_R^rK(s)ds\le t+R_1}(t+R_1)^{-q }rdrdt\right)^{\frac{p-1}p}\\
\les&\left(\int_0^T\int_{\Omega}|u|^pb_q\eta_M^{*2p'} dV_\gm dt\right)^{\frac1p}.\\
\end{aligned}
\end{equation*}
In summary, recalling \eqref{cristep1},
we obtain
$$MY'(M)=
\int_0^T\int_{\Omega}|u|^pb_q(\eta_{M}^*)^{2p'} dV_\gm dt
\gtrsim \left(\log M\right)^{1-p}Y^p(M)\ ,
MY'(M)\gtrsim \ep^p\ ,
$$
for any $M\in [2, T]\subset [2, T(\ep))$.
Let $M=T$, we see that
\begin{equation}\label{Yp}
Y^p(T)\le C T\left(\log T\right)^{p-1}Y'(T),\
\ep^p\les \ TY'(T)\ ,
\end{equation}
for any $T\in [2, T(\ep))$.
With the help of
 \eqref{Yp},
the desired lifespan estimate
$$T(\ep)\le \exp(c\ep^{-p(p-1)})$$
 for $p=p_c(2)$ follows directly from
the following lemma with $p_1=p_2=p_c(2)$
and $\delta=\e^p$.
\begin{lem}
{\bf (Lemma 3.10 in \cite{ISWa})} Let $2<t_0<T$, $0\le \phi\in C^1([t_0, T))$. Assume that
\begin{equation}
\left\{
\begin{aligned}
& \delta\le K_1t\phi'(t), \quad t\in (t_0, T), \\
& \phi(t)^{p_1}\le K_2t(\log t)^{p_2-1}\phi'(t), \quad t\in (t_0, T)\\
\end{aligned}
\right.
\end{equation}
with $\delta, K_1, K_2>0$ and $p_1, p_2>1$. If $p_2<p_1+1$, then there exist positive constants $\delta_0$ and $K_3$(independent of $\delta$) such that
\begin{equation}
\begin{aligned}
T\le \exp\left(K_3\delta^{-\frac{p_1-1}{p_1-p_2+1}}\right)
\end{aligned}
\end{equation}
when $0<\delta<\delta_0$.
\end{lem}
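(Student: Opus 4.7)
The plan is to treat the two hypotheses asymmetrically: use the first only to install a crude logarithmic lower bound for $\phi$, then feed that bound into an integrated form of the second to pin down $T$. First I would integrate the hypothesis $\phi'(t) \ge \delta/(K_1 t)$ from $t_0$ to $t$ to obtain $\phi(t) \ge \phi(t_0) + (\delta/K_1)\log(t/t_0)$; after dropping $\phi(t_0) \ge 0$ and restricting to $t \ge t_0^2$, this gives $\phi(t) \gtrsim \delta \log t$. A byproduct of the first inequality is $\phi'>0$, so $\phi>0$ on $(t_0, T)$, which legitimizes the division by $\phi^{p_1}$ in the next step.

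Next, I would rewrite the second inequality as the Bernoulli-type bound $(d/dt)\bigl[-\phi^{-(p_1-1)}/(p_1-1)\bigr] \ge 1/(K_2 t(\log t)^{p_2-1})$. Integrating from an intermediate time $t_1$ up to $T$ and discarding the non-negative $\phi(T)^{-(p_1-1)}$ term yields $\phi(t_1)^{-(p_1-1)}/(p_1-1) \ge K_2^{-1}\int_{t_1}^T ds/(s(\log s)^{p_2-1})$. I would then choose $t_1 = T^{1/2}$ (taking $T$ large, via $\delta_0$ small, so that $t_1 \ge t_0^2$), substitute the lower bound $\phi(t_1)\gtrsim \delta \log T$, and change variables $u=\log s$ to evaluate $\int_{(\log T)/2}^{\log T} u^{1-p_2}\,du$. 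Matching powers gives $C\delta^{-(p_1-1)}(\log T)^{-(p_1-1)} \gtrsim (\log T)^{2-p_2}$, which, since $p_2<p_1+1$ ensures $p_1-p_2+1>0$, rearranges to $\log T \le K_3\,\delta^{-(p_1-1)/(p_1-p_2+1)}$.

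The main obstacle I expect is verifying that the integral estimate really produces a uniform lower bound $c_{p_2}(\log T)^{2-p_2}$ across the three regimes $p_2<2$, $p_2=2$, $p_2>2$. For $p_2<2$ the integral equals $(1-2^{p_2-2})/(2-p_2)\cdot(\log T)^{2-p_2}$ with both factors positive; for $p_2>2$ it equals $(2^{p_2-2}-1)/(p_2-2)\cdot(\log T)^{2-p_2}$ with both factors again positive; at the critical value $p_2=2$ the integral collapses to $\log 2$, which is consistent since then $2-p_2=0$ and $(p_1-1)/(p_1-p_2+1)=1$. Once this uniform lower bound is in hand, the exponent-matching at the end produces the stated bound in all cases, with $\delta_0$ chosen so that the value of $T$ forced by the inequality exceeds $t_0^2$ and the intermediate approximations $\phi(t_1)\gtrsim\delta\log T$ and $t_1\ge t_0^2$ remain valid.
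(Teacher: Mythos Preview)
Your proposal is correct. The paper does not actually prove this lemma: it is quoted verbatim as Lemma~3.10 of \cite{ISWa} and applied as a black box with $p_1=p_2=p_c(2)$ and $\delta=\varepsilon^p$, so there is no in-paper argument to compare against. Your approach---first extracting $\phi(t)\gtrsim \delta\log t$ from the linear hypothesis, then integrating the Bernoulli form $\phi'/\phi^{p_1}\ge (K_2 t(\log t)^{p_2-1})^{-1}$ over a dyadic window $[\sqrt{T'},T']$ and matching powers of $\log T'$---is the standard route, and your case split for $\int_{a/2}^{a} u^{1-p_2}\,du$ across $p_2\lessgtr 2$ is handled correctly. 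The only cosmetic point is that since $\phi$ is defined on $[t_0,T)$ you should integrate to an arbitrary $T'<T$ (with $T'\ge t_0^4$ so that $t_1=\sqrt{T'}\ge t_0^2$) and then let $T'\uparrow T$; the residual case $T\le t_0^4$ is absorbed by choosing $K_3$ large and $\delta_0$ small.
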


\section{Test function $\Delta_{\gm_1+\gm_3} \Phi_\la=\la^2\Phi_\la$}\label{sec:AE3}
In this section, we consider the generalized eigenvalue problem of the Laplacian operator on $(\Om,\gm)$,
Theorem \ref{thm2}, when the metric $\gm$ is an exponential perturbation of the spherically symmetric, long range asymptotically Euclidean metric $\gm_1$.

\subsection{Spherically symmetric case}
Let us begin with the spherically symmetric case,
considering \eqref{eq-eigen} with
$\Om=\overline{B_{R}}^{c}$ and
$\gm=\gm_{1}$:
$$\Delta_{\gm_{1}} \phi_\la=\la^2\phi_\la, x\in  \Om;\ \phi_\la=0, x\in \pa \Om\ .
$$
If we look at the corresponding problem on $\R^2$
$$\Delta_{\gm_{1}} h_\la=\la^2 h_\la, x\in  \R^2;\ h_\la=1, r=\la^{-1}\ ,
$$ it is known from Liu-Wang \cite{LWp} that there exists a small positive constant $\lambda_2$ such that
$$h_\la(r)\simeq \<r\la\>^{-1/2}e^{\la\int^{r}_R K(s)ds}\ , \ \forall \lambda\in(0, \lambda_2)\ .$$
By comparing the desired solution with $h_\la$, we shall prove that there exists $\phi_{\la}$ such that
\beeq \phi_\la\le h_\la, \phi_\la\ge h_\la-h_\la(R),\eneq
\beeq\label{eq-key0}
\phi_\la\sim \left\{
\begin{array}{ll}
	\frac{\phi_0(r)}{\phi_0(\la^{-1})}\simeq \frac{\ln r/R}{\ln (R\la)^{-1}} & r\le \la^{-1},\\
	h_\la\simeq  \<r\la\>^{-1/2}e^{\la\int^{r}_R K(s)ds} & r\ge \la^{-1},
\end{array}
\right.
\eneq
for $\la\in (0, \min(1/(2R), \la_2))$.

\begin{lem}\label{lem5}
 Let $n=2$. Then for any $\lambda\in (0, \la_2)$, the following boundary value problem
\begin{equation}
\label{phiramda}
\left\{
\begin{aligned}
& \Delta_{\gm_1} \phi_{\lambda}(x)=\lambda^2\phi_{\lambda}(x),~~x\in \overline{B_R}^c,\\
&\phi_{\lambda}(x)\Big|_{\partial B_R}=0,\\
&\phi_{\lambda}(x)-h_\la(x)
\rightarrow 0
,~~~|x|\rightarrow \infty\\
\end{aligned}
\right.
\end{equation}
admits a spherically symmetric  solution satisfying
\begin{equation}\label{prophiramda}
\begin{aligned}
0\le h_\la(r)-h_\la(R)\le \phi_{\lambda}(x)\leq h_\la(r),~~x\in B_R^c\ .
\end{aligned}
\end{equation}
\end{lem}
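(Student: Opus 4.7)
The plan is to exploit the spherical symmetry of \eqref{phiramda} to reduce it to a linear second-order ODE on $[R,\infty)$, and then use the maximum principle for the operator $-\Delta_{\gm_1}+\la^2$ (which has a strictly positive zeroth-order coefficient) to obtain the two-sided pointwise control in \eqref{prophiramda}.

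For existence, recall from the asymptotic analysis of Liu-Wang \cite{LWp} that the radial equation $\Delta_{\gm_1}\phi=\la^2\phi$ has two linearly independent solutions: $h_\la$ itself, growing like $\langle r\la\rangle^{-1/2}e^{\la\int_R^r K(s)\,ds}$, and a second solution $\tilde h_\la$ that decays at infinity (obtained via the Wronskian formula, since in the radial variable the equation is a 2nd-order linear ODE). Since $h_\la(R)>0$ by the asymptotic, the function $\phi_\la(r):=h_\la(r)-\frac{h_\la(R)}{\tilde h_\la(R)}\,\tilde h_\la(r)$ solves the eigenvalue equation, vanishes on $\pa B_R$, and differs from $h_\la$ by a decaying function, so $\phi_\la-h_\la\to 0$ at infinity. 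An equivalent route is to solve the Dirichlet problem on the bounded annulus $R<r<N$ with boundary data $0$ on $\pa B_R$ and $h_\la$ on $\{r=N\}$, and pass to the limit $N\to\infty$ using the uniform bounds proved next.

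For the upper bound $\phi_\la\le h_\la$, put $w:=h_\la-\phi_\la$. Then $(-\Delta_{\gm_1}+\la^2)w=0$ on $\overline{B_R}^c$, $w|_{\pa B_R}=h_\la(R)\ge 0$, and $w\to 0$ at infinity. Because $-\Delta_{\gm_1}+\la^2$ carries a strictly positive zeroth-order term, the weak maximum principle rules out an interior negative minimum, hence $w\ge 0$. For the lower bound $\phi_\la\ge h_\la-h_\la(R)$, set $\tilde w:=\phi_\la-\bigl(h_\la-h_\la(R)\bigr)$, which satisfies $(-\Delta_{\gm_1}+\la^2)\tilde w=\la^2 h_\la(R)\ge 0$, $\tilde w|_{\pa B_R}=0$, and $\tilde w\to h_\la(R)\ge 0$ at infinity; the same maximum principle then gives $\tilde w\ge 0$. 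The remaining inequality $h_\la(r)-h_\la(R)\ge 0$ for $r\ge R$ is a property of the positive radial solution built in \cite{LWp}, and can be read from its explicit form or from the fact that the radial ODE forces $K^{-1}r\,h_\la'$ to be monotonically increasing.

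The main obstacle is producing the decaying companion $\tilde h_\la$ (or, equivalently, ensuring convergence of the annulus approximation), which requires the same kind of asymptotic matching already carried out in \cite{LWp}. Once this existence input is secured, the verification of \eqref{prophiramda} reduces to a clean application of the comparison principle for the elliptic operator $-\Delta_{\gm_1}+\la^2$.
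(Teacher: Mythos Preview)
Your proposal is correct, and the comparison-principle half of your argument is essentially identical to the paper's: the paper also proves $\widetilde{\phi}_\lambda:=\phi_\lambda-h_\lambda\in[-h_\lambda(R),0]$ by applying the maximum principle for $-\Delta_{\gm_1}+\lambda^2$ to exactly the functions you wrote down (your $w$ is $-\widetilde{\phi}_\lambda$ and your $\tilde w$ is $\widetilde{\phi}_\lambda+h_\lambda(R)$). The only cosmetic difference is that for the upper bound the paper first shows $\widetilde{\phi}_\lambda\le C_0$ for every $C_0>0$ and lets $C_0\downarrow 0$, whereas you invoke the maximum principle directly; both are fine once one notes that $w\to 0$ at infinity forces the infimum to be attained in a compact set.

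The genuine difference is in the existence step. The paper does not use the ODE structure: it subtracts $h_\lambda$ and solves for $\widetilde{\phi}_\lambda$ by a variational argument, minimizing $I[u]=\int_{\overline{B_R}^c}(\lambda^2|u|^2+g_1^{jk}\nabla_j u\nabla_k u)\,dV_{\gm_1}$ over $H^1(\overline{B_R}^c)$ with the given boundary data, which yields a unique $H^1\cap C^\infty$ solution directly on the unbounded domain. Your route instead exploits that the radial equation is a second-order linear ODE and builds $\phi_\lambda$ as $h_\lambda$ minus a multiple of the decaying companion solution (or, equivalently, as a limit of annulus problems). Your approach is more elementary and makes the spherical symmetry of $\phi_\lambda$ immediate, but it leans on the asymptotic matching from \cite{LWp} to identify and control $\tilde h_\lambda$; the paper's variational approach is more self-contained, automatically gives uniqueness in $H^1$, and would generalize verbatim to non-radial boundary data. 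Either argument closes the lemma.
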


\begin{proof}
Firstly we consider the following boundary value problem
\begin{equation}
\label{phiramda1}
\left\{
\begin{aligned}
& \Delta_{\gm_1} \widetilde{\phi}_{\lambda}(x)=\lambda^2\widetilde{\phi}_{\lambda}(x),~~x\in \overline{B_R}^c,\\
&\widetilde{\phi}_{\lambda}(x)\Big|_{\partial B_R}=-h_{\lambda}(x)\Big|_{\partial B_R},\\
&\widetilde{\phi}_{\lambda}(x)\to 0,~~~|x|\rightarrow \infty .\\
\end{aligned}
\right.
\end{equation}
By standard variational argument with functional $I[u]=\int_{\overline{B_R}^c} (\la^2 |u|^2+g_{1}^{jk}\nabla_j u \nabla_k u) dV_{\gm_{1}}$, we see that there
admits a unique solution $\widetilde{\phi}_{\lambda}(x)\in H^1(\overline{B_R}^c)\cap C^\infty(\overline{B_R}^c)$.
 Let
\[
\phi_{\lambda}(x)=\widetilde{\phi}_{\lambda}(x)+h_{\lambda}(x),
\]
then $\phi_{\lambda}(x)$ satisfies the boundary value problem \eqref{phiramda}.

It remains to prove
\[
\widetilde{\phi}_{\lambda}(x)\in [- h_\la(R), 0],~~~x\in \overline{B_R}^c\ .
\]
For any $C_0>0$, by  \eqref{phiramda1}, we know that
\begin{equation}\nonumber
\left\{
\begin{aligned}
& -\Delta_{\gm_1} \left(\widetilde{\phi}_{\lambda}(x)-C_0\right)+\lambda^2\left(\widetilde{\phi}_{\lambda}(x)-C_0\right)
=-\lambda^2C_0<0,~~x\in \overline{B_R}^c,\\
&\left(\widetilde{\phi}_{\lambda}(x)-C_0\right)\Big|_{\partial B_R}=\left(-h_{\lambda}(x)-C_0\right)\Big|_{\partial B_R}< 0,\\
&\widetilde{\phi}_{\lambda}(x)-C_0\to -C_0<0,~~~|x|\rightarrow \infty.\\
\end{aligned}
\right.
\end{equation}
By maximum principle, we have
\[
\widetilde{\phi}_{\lambda}(x)-C_0\leq 0,~~~x\in \overline{B_R}^c,
\]
for any $C_0>0$,
which means
\[
\widetilde{\phi}_{\lambda}(x)\leq 0,~~~x\in \overline{B_R}^c.
\]

Similarly, it is clear that
\begin{equation}\nonumber
\left\{
\begin{aligned}
& -\Delta_{\gm_1} \left(\widetilde{\phi}_{\lambda}(x)+h_\la(R)\right)+\lambda^2\left(\widetilde{\phi}_{\lambda}(x)+h_\la(R)\right)
=\lambda^2 h_\la(R)>0,~~x\in \overline{B_R}^c,\\
&\left(\widetilde{\phi}_{\lambda}(x)+h_\la(R)\right)\Big|_{\partial B_R}=\left(-h_{\lambda}(x)+h_\la(R)\right)\Big|_{\partial B_R}= 0,\\
&\widetilde{\phi}_{\lambda}(x)+h_\la(R)\to h_\la(R)>0,~~~|x|\rightarrow \infty.\\
\end{aligned}
\right.
\end{equation}
By maximum principle, we have
\[
\widetilde{\phi}_{\lambda}(x)+h_\la(R)\ge 0,~~~x\in \overline{B_R}^c,
\]
and this completes the proof of Lemma \ref{lem5}.
\end{proof}
With the help of Lemma \ref{lem5}, we get
\begin{equation}\label{eq-equiv1}\phi_{\lambda}(x)\simeq h_\la(r),
\eneq
for any $r\ge \la^{-1}$ with $\la< \min(\la_2, (2R)^{-1})$.

\begin{lem}
\label{philbd}
Let $\phi_{\lambda}(x)$ be the function in Lemma \ref{lem5} and $0<\la< \min(\la_2, (2R)^{-1})$. Then we have
\begin{equation}\label{lboundofvarphi}
\begin{aligned}
\phi_{\lambda}(x)\simeq  \frac{\ln r/R}{\ln 1/(\la R)}, \ \forall R\le r\le \la^{-1}.
\end{aligned}
\end{equation}
\end{lem}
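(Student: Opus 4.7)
The plan is to establish the two-sided estimate \eqref{lboundofvarphi} on the annulus $R \le r \le \la^{-1}$, treating the upper and lower bounds separately via the radial ODE satisfied by $\phi_\la$.

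For the upper bound, I would use the maximum principle with the comparison function $V(r) := \phi_\la(\la^{-1})\phi_0(r)/\phi_0(\la^{-1})$, where $\phi_0(r) = \int_R^r K(\tau)/\tau\,d\tau$ is the harmonic function from \eqref{eq-key-phi0}. Since $\Delta_{\gm_1}\phi_0 = 0$, the function $V-\phi_\la$ satisfies $(-\Delta_{\gm_1}+\la^2)(V-\phi_\la) = \la^2 V \ge 0$ on the annulus with zero boundary values at both $r=R$ and $r=\la^{-1}$. The weak maximum principle for the operator $-\Delta_{\gm_1}+\la^2$, which carries a strictly positive zero-order coefficient, then yields $V \ge \phi_\la$. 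Combined with $\phi_\la(\la^{-1}) \lesssim 1$ (from \eqref{eq-equiv1} and the asymptotics \eqref{eq-key0} of $h_\la$), this gives $\phi_\la(r) \lesssim \phi_0(r)/\phi_0(\la^{-1})$.

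For the lower bound, I would integrate the radial equation $(rK^{-1}\phi_\la')' = \la^2 rK\phi_\la$ twice using $\phi_\la(R) = 0$, obtaining the Volterra-type representation
\[
\phi_\la(r) = \alpha\,\phi_0(r) + \la^2 \int_R^r sK(s)\,\phi_\la(s)\bigl[\phi_0(r) - \phi_0(s)\bigr]\,ds,
\]
with $\alpha := RK(R)^{-1}\phi_\la'(R) > 0$ by Hopf's lemma. Since $s \le r$ implies $\phi_0(s) \le \phi_0(r)$, the integrand is nonnegative, so $\phi_\la(r) \ge \alpha\phi_0(r)$ on $[R,\la^{-1}]$; the task thus reduces to proving $\alpha \gtrsim 1/\phi_0(\la^{-1})$. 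To pin down $\alpha$, I would evaluate the identity at $r = \la^{-1}$ and insert the upper bound $\phi_\la(s) \le \phi_\la(\la^{-1})\phi_0(s)/\phi_0(\la^{-1})$ into the integral term. After the change of variable $t = \phi_0(s)$ (so that $sK\,ds = s(t)^2\,dt$), the estimated correction becomes $(\la^2\phi_\la(\la^{-1})/\phi_0(\la^{-1}))\int_0^{s_0} s(t)^2\,t(s_0 - t)\,dt$ with $s_0 = \phi_0(\la^{-1})$. The long-range condition $K - 1 = O(s^{-\rho_1})$ from \eqref{dl2} forces $s(t) \simeq Re^t$ up to a bounded multiplicative factor that drops out, and the Laplace-type identity $\int_0^\infty we^{-2w}\,dw = 1/4$ then gives a leading-order value $\phi_\la(\la^{-1})/4$ for the correction. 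Combined with $\phi_\la(\la^{-1}) \gtrsim 1$ (again from \eqref{eq-equiv1} and \eqref{eq-key0}), this produces $\alpha\phi_0(\la^{-1}) \ge (1-\theta)\phi_\la(\la^{-1}) \gtrsim 1$ for some $\theta < 1$ uniform for $\la$ small, which completes the proof.

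The main obstacle is the quantitative integral estimate in the last step, where it is essential that the implicit leading constant be strictly less than $1$ throughout the allowed range of $\la$. In the Euclidean case this constant equals exactly $1/4$, reflecting the modified Bessel structure of the unperturbed ODE, and the long-range perturbation \eqref{dl2} contributes only lower-order terms through the bounded factor relating $s(t)$ to $Re^t$, so the gap cannot be closed for $\la$ sufficiently small.
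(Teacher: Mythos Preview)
Your upper bound argument via the comparison $V = \phi_\lambda(\lambda^{-1})\,\phi_0/\phi_0(\lambda^{-1})$ is essentially the paper's: the paper sets $G_\lambda = \phi_\lambda - \phi_0/\phi_0(\lambda^{-1})$, which differs only by the harmless factor $\phi_\lambda(\lambda^{-1}) \simeq 1$, and applies the maximum principle for $-\Delta_{\gm_1}+\lambda^2$ exactly as you do.

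For the lower bound the paper takes a different and cleaner route. Rather than integrating the ODE and chasing a quantitative constant, it compares $\phi_\lambda$ against the product $\delta\,\phi_0(r)h_\lambda(r)/\phi_0(\lambda^{-1})$. Because both factors are radial with $\Delta_{\gm_1}\phi_0=0$ and $\Delta_{\gm_1}h_\lambda=\lambda^2 h_\lambda$, one computes
\[
(-\Delta_{\gm_1}+\lambda^2)\bigl(\phi_0 h_\lambda\bigr) \;=\; -\,2K^{-2}\,\phi_0'(r)\,h_\lambda'(r)\;<\;0,
\]
the cross term having the right sign automatically since $h_\lambda$ is strictly increasing. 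With $\delta$ chosen (uniformly, by \eqref{eq-equiv1}) so that $\delta \le \phi_\lambda(\lambda^{-1})$, the maximum principle on the annulus gives $\phi_\lambda \ge \delta\,h_\lambda(r)\,\phi_0(r)/\phi_0(\lambda^{-1})$, and since $h_\lambda \simeq 1$ on $[R,\lambda^{-1}]$ this is the desired lower bound. No integral estimate is needed, and the argument works for \emph{every} $\lambda$ in the stated range.

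Your Volterra argument is valid in outline, but the key step --- showing the correction term is $\le \theta\,\phi_\lambda(\lambda^{-1})$ with $\theta<1$ --- only gives $\theta$ close to $1/4$ asymptotically as $\lambda\to 0$. For $\lambda$ near $\min(\lambda_2,(2R)^{-1})$ the crude bound coming from $K\in(\delta_0,\delta_0^{-1})$ is of order $1/(4\delta_0^2)$, which need not be below $1$. So your proof, as you yourself note, establishes the estimate only after shrinking the admissible range of $\lambda$. That is harmless for the application (hypothesis \eqref{H1} requires only some $\lambda_0>0$), but it does not prove the lemma as stated. The paper's device of multiplying by $h_\lambda$ replaces your delicate Laplace-type estimate with a pure sign check, which is what buys the full range.
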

\begin{proof}
Let $\de>0$ to be determined and
\[
F_{\lambda}(x)=\de\frac{\phi_0(x)}{\phi_0(\la^{-1})}h_{\lambda}(x)-\phi_{\lambda}(x),
\]
then, by \eqref{eq-gm1},
\begin{equation}\nonumber
\left\{
\begin{aligned}
& -\Delta_{\gm_1} F_\la+\lambda^2F_{\lambda}(x)=-2 \frac{\de}{K^2(r)\ln(\la^{-1})}\partial_r h_{\lambda}(x) \pa_r \phi_0<0,~~x\in \overline{B_R}^c,\\
&F_{\lambda}(x)= 0,~~~|x|=R,\\
&F_{\lambda}(x)= \de-\phi_\la(x)\le 0,~~~|x|=\la^{-1}\ ,\\
\end{aligned}
\right.
\end{equation}
when $\de>0$ is sufficiently small such that
$\phi_\la(x)\ge \de =\de h_\la(r)$ for $r=\la^{-1}$, in view of
\eqref{eq-equiv1}.
By the maximum principle, one obtain
\[
F_{\lambda}(x)\le 0,
\phi_{\lambda}(x)\ge \de\frac{\phi_0(x)}{\phi_0(\la^{-1})}h_{\lambda}(x)\ge \de h_\la(0)\frac{\phi_0(x)}{\phi_0(\la^{-1})}\ge \de'\frac{\ln r/R}{\ln (\la R)^{-1}} , \
\forall R\le r\le \la^{-1}\ ,
\]
for some $\de'>0$, which yields the desired lower bound.

On the other hand, let
\[
G_{\lambda}(x)=\phi_{\lambda}(x)-\frac{\phi_0}{\phi_0(\la^{-1})},
\]
then
\begin{equation}\nonumber
\left\{
\begin{aligned}
& -\Delta_{\gm_1} G_\la+\lambda^2G_{\lambda}(x)=-\la^2 \frac{\phi_0}{\phi_0(\la^{-1})}<0,~~x\in \overline{B_R}^c,\\
&G_{\lambda}(x)= 0,~~~|x|=R,\\
&G_{\lambda}(x)=\phi_\la(x)- 1\le 0,~~~|x|=\la^{-1}\ ,\\
\end{aligned}
\right.
\end{equation}
which gives us $G_{\lambda}(x)\le 0$.
\end{proof}

\subsection{Derivative estimates}
For future reference, we need to obtain more information concerning the behavior of $\phi_\la$.
At first,
we  claim that we have the following derivative estimates
\beeq\label{eq-dr-est}
\pa_{r}\phi_\la(r)\leq D_{0}\la^2 (r-R) \phi_\la (r)\ ,
|\pa_r^2\phi_\la(r)|\le D_0 \la^2\phi_\la (r)\ , \forall r\ge R
\eneq
for some constant $D_0$ independent of $\lambda\in (0, 1/(2R)]$.

As $\phi_\la$ is radial, by \eqref{dl3} and  \eqref{eq-gm1},  we have
$$\Delta_{\gm_1}\phi_\la=K^{-1}r^{-1}\pa_{r}( K^{-1} r \pa_{r}\phi_\la)\ ,$$
and so
\beeq
\label{s01}
\pa_{r}( K^{-1} r \pa_{r}\phi_\la)=\lambda^{2}Kr \phi_\la.
\eneq

As $\phi_\la$ is increasing, we get
$$
K^{-1} r \pa_{r}\phi_\la\le
\int_R^r\lambda^{2}\tau K\phi_\la(\tau) d\tau
\le \frac12\lambda^{2}(r^{2}-R^2)\|K\|_{L^\infty} \phi_\la(r)\ ,$$
that is, $
\pa_{r}\phi_{\lambda}
\le \|K\|^2_{L^\infty}\lambda^{2}(r-R)\phi_\la(r)$.

For the second order derivative of $\phi_{\lambda}$, by \eqref{s01}, we have
$$|\pa^{2}_{r}\phi_{\lambda}|
=|\lambda^{2}K^{2}\phi_\la-\big(\frac{ 1}{r}-\frac{K'}{K}\big)\pa_{r}\phi_\la|
\leq \lambda^{2}K^{2}\phi_{\lambda}+\frac{C}{r}\pa_{r}\phi_{\lambda} \ \les \ \lambda^{2}\phi_{\lambda}$$
for some $C>0$ due to \eqref{dl2}.

\subsection{Test function $\Delta_{\gm_1+\gm_3} \Phi_\la=\la^2\Phi_\la$}
Turning to the exponential perturbation of the spherically symmetric
asymptotically Euclidean metric, we consider the following problem
$$\Delta_{\gm_1+\gm_3} \Phi_\la=\la^2\Phi_\la, x\in  \Om;\ \Phi_\la=0, x\in \pa \Om\ .
$$
We would like to obtain similar bounds as that for $\gm_1$.
\begin{lem}
\label{asy1}There exists $\la_3>0$ such that for any $\la\in (0,\la_3)$, we can construct a solution $\Phi_\la$ such that
\beeq\label{eq-key}
\Phi_\la\sim \phi_\la \sim\left\{
\begin{array}{ll}
	\frac{\phi_0(r)}{\phi_0(\la^{-1})}  & r\le \la^{-1},\\
	h_\la\simeq  \<r\la\>^{-1/2}e^{\la\int^{r}_R K(s)ds} & r\ge \la^{-1}.
\end{array}
\right.
\eneq
\end{lem}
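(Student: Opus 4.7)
The plan is to emulate the two-step construction of Lemma \ref{lem5}: first build a reference entire solution on $\R^{2}$, then correct the boundary values by solving a homogeneous Dirichlet problem on the exterior. The reference solution $H_\la$ on $(\R^{2},\gm_1+\gm_3)$, satisfying $\Delta_{\gm_1+\gm_3}H_\la=\la^{2}H_\la$ with $H_\la(x)\simeq \langle r\la\rangle^{-1/2}e^{\la\int_R^r K(s)ds}$ for sufficiently small $\la$, is supplied by Liu--Wang \cite{LWp}. For the correction I would solve
\[
\Delta_{\gm_1+\gm_3}\tilde\psi_\la=\la^{2}\tilde\psi_\la\text{ on }\Om,\ \ \tilde\psi_\la\big|_{\pa\Om}=-H_\la\big|_{\pa\Om},\ \ \tilde\psi_\la\to 0\ \text{at infinity}
\]
by the same variational argument as in Lemma \ref{lem5}: uniform ellipticity \eqref{unelp} persists for $\gm_1+\gm_3$ (after possibly shrinking the short-range part), so the bilinear form $\int_{\Om}(\la^{2}uv+g^{jk}\nabla_j u\nabla_k v)\,dV_\gm$ is coercive on $H^{1}_0(\overline{B_R}^c)$ and Lax--Milgram applies. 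Set $\Phi_\la:=H_\la+\tilde\psi_\la$, which vanishes on $\pa\Om$ by construction.

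Next, I would apply the strong maximum principle for $-\Delta_{\gm_1+\gm_3}+\la^{2}$ to $\tilde\psi_\la-C_0$ and to $\tilde\psi_\la+\sup_{\pa\Om}H_\la$, verbatim as in Lemma \ref{lem5}, to conclude
\[
H_\la-\sup\nolimits_{\pa\Om}H_\la\le \Phi_\la\le H_\la\quad\text{on }\Om.
\]
For $r\ge \la^{-1}$ this immediately gives $\Phi_\la\simeq H_\la\simeq \langle r\la\rangle^{-1/2}e^{\la\int_R^r K\,ds}$, since in that range $H_\la$ dominates the bounded boundary correction $\sup_{\pa\Om}H_\la$.

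For the inner region $R\le r\le \la^{-1}$, I would mimic Lemma \ref{philbd}, using the sub-barrier $\de\,\phi_0(x)H_\la(x)/\phi_0(\la^{-1})$ and the super-barrier $\phi_0(x)/\phi_0(\la^{-1})$, and invoking the maximum principle for $-\Delta_{\gm_1+\gm_3}+\la^{2}$ on the annular region $R\le r\le \la^{-1}$, with boundary data matched via the outer estimate just obtained at $r=\la^{-1}$. The main technical obstacle is the sub-barrier inequality: in Lemma \ref{philbd} the radial symmetry of $\gm_1$ produced a single favourable cross-term $-2K^{-2}r^{-1}(\pa_r h_\la)(\pa_r\phi_0)/\phi_0(\la^{-1})$ of definite sign, whereas under the perturbation by $\gm_3$ additional cross-terms involving $g_{3}^{jk}$, $\pa_j g_{3}^{jk}$ and the first/second derivatives of $\phi_0$ and $H_\la$ appear. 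Using the exponential decay \eqref{dlfjia} of $\gm_3$ together with derivative estimates for $H_\la$ analogous to \eqref{eq-dr-est}, these extra contributions are uniformly dominated by the favourable one once $\la$ is small enough, so the sub-barrier inequality persists throughout $R\le r\le \la^{-1}$ for all $\la\in(0,\la_3)$ with $\la_3>0$ sufficiently small, completing the proof.
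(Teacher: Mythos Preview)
Your route is genuinely different from the paper's. You take the full-space solution $H_\la$ for $\gm=\gm_1+\gm_3$ from \cite{LWp} and correct the boundary values via the maximum principle, trying to replay Lemmas~\ref{lem5} and~\ref{philbd} for the perturbed operator. The paper instead takes the \emph{radial} exterior solution $\phi_\la$ for $\gm_1$ (whose asymptotics \eqref{eq-key0} are already secured by Lemmas~\ref{lem5}, \ref{philbd}) and sets $\psi=\phi_\la-\Phi_\la$, which solves $\Delta_\gm\psi=\la^2\psi-(\Delta_{\gm_1}-\Delta_\gm)\phi_\la$ with zero Dirichlet data. Using the derivative bounds \eqref{eq-dr-est} and the exponential decay \eqref{dlfjia}, the forcing is $O(\la^2)$ in every $L^q$; a chain of energy, elliptic $W^{2,q}$, Gagliardo--Nirenberg and Sobolev estimates then yields $\|\psi\|_{L^\infty}+\|\nabla\psi\|_{L^\infty}\lesssim\la^{1/2}$, hence $|\psi|\lesssim\la^{1/2}\min(r-R,1)\lesssim\la^{1/2}\phi_0$. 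Since $\phi_\la\gtrsim\phi_0/\ln(\la^{-1})$ and $\la^{1/2}\ln(\la^{-1})\to0$, one obtains $\Phi_\la\sim\phi_\la$ directly. No barriers are needed for the perturbed operator at all.

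Your outer-region argument is fine, but the inner sub-barrier step has a real gap. In Lemma~\ref{philbd} the identity
\[
(-\Delta_{\gm_1}+\la^2)\Big(\de\tfrac{\phi_0}{\phi_0(\la^{-1})}h_\la\Big)
=-\tfrac{2\de}{K^2\phi_0(\la^{-1})}\,\pa_r\phi_0\,\pa_r h_\la
\]
has the right sign only because the single cross term is positive; its \emph{magnitude}, however, is merely $O(\la^2/\phi_0(\la^{-1}))$ in the region $R\le r\le\la^{-1}$, since $\pa_r h_\la\sim\la^2 r$ there. When you pass to $\gm_1+\gm_3$, the new terms do not carry extra powers of $\la$: for instance $(\Delta_\gm-\Delta_{\gm_1})\phi_0\cdot H_\la/\phi_0(\la^{-1})$ is of size $O(1/\phi_0(\la^{-1}))$ near $r=R$, because \eqref{dlfjia} gives $|\gm_3|\lesssim e^{-\al\int_R^rK}$, which is $O(1)$ (not small) at the boundary. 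Thus as $\la\to0$ the perturbative terms overwhelm the favourable one rather than being dominated by it, contrary to your claim. Switching to the $\gm$-harmonic $\phi_0$ removes this particular term but leaves you needing $g^{jk}\pa_j\phi_0\,\pa_k H_\la>0$ for two non-radial functions, which is not available without separate derivative control. The paper's energy/regularity approach sidesteps exactly this difficulty by never requiring a sign condition on the perturbed cross terms.
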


\subsection{Proof of Lemma \ref{asy1}}\label{sec:2}
As in \cite[Lemma 3.1]{LWp},
see also Wakasa-Yordanov \cite[Lemma 2.2]{WaYo19}, we introduce
 $\psi=\phi_\la-\Phi_\la$ where $\Delta_{\gm_{1}}\phi_\la=\lambda^{2}\phi_\la$. Then we are reduced to prove existence of $\psi$ and show smallness of $\|\psi\|_{L^{\infty}}$.

In fact,  $\psi$ satisfies
\begin{align}
\label{est-yx}
\Delta_{\gm}\psi=\Delta_{\gm}(\phi_\la-\Phi_\la)=\lambda^{2}\phi_\la-\lambda^{2}\Phi_\la-(\Delta_{\gm_{1}}-\Delta_{\gm})\phi_\la=\lambda^{2}\psi-W\phi_\la\ ,
\end{align}
and
$\psi |_{\partial B_R}=0$.
We claim that, for any $\la\in (0, \min(\al/2, 1/(2R), \la_2))$,
\beeq
\label{eq-Lq-forcing}
\|W\phi_\la\|_{ L^{q}}\les \la^2,\ \forall \ q\in [1,\infty]\ .
\eneq
Actually, we have
$$W\phi_\la=
(\gm^{jk}_{1}-\gm^{jk})
\pa_{j}\pa_{k}\phi_\la+
[g_1^{-1/2}\pa_j (g_1^{1/2}\gm_1^{jk})
-g^{-1/2}\pa_j (g^{1/2}\gm^{jk})]
 \pa_k\phi_\la\ .
$$
By
\eqref{eq-dr-est}
 and \eqref{dlfjia}
 with $\lambda<\min(\al/2, 1/(2R), \la_2)$, it is easy to see that
$$|W \phi_\la|\les \sum_{j,k}
\sum_{1\le |\be|\leq 2}
|\nabla^{\leq 1}g^{jk}_{3}||\nabla^{\be}\phi_\la|\les
\la^2\< r\> e^{(\la-\al)\int^{r}_{R}K(\tau)d\tau}
\les
\la^2 \< r\>e^{-\frac{\al}{2}\de_0 r}
\ ,$$
which gives us \eqref{eq-Lq-forcing}.

Standard elliptic theory ensures that there exists a unique weak $H^1_0$ solution $\psi$
to \eqref{est-yx}, which satisfies
$\psi\in H^3\cap C^\infty$.
  To show the smallness of $\| \psi\|_{L^{\infty}}$, we first take the (natural) inner product of \eqref{est-yx} with $\psi$ to get
$$-\langle \Delta_{\gm}\psi, \psi\rangle_\gm+\lambda^{2}\langle \psi, \psi\rangle_\gm=-\langle W\phi_\la,  \psi\rangle_\gm\ .$$
Thus by the Cauchy-Schwarz inequality and uniform elliptic condition \eqref{unelp} we have
$$\delta_{0}\|\nabla \psi\|^2_{L^{2}_{\gm}}+\lambda^{2}\|\psi\|^2_{L^{2}_{\gm}}\ \les \ \lambda^{2}\|\psi\|_{L^{2}_{\gm}},$$
which yields
$$\|\psi\|_{L^{2}_x}\ \les \ 1, \ \|\nabla \psi\|_{L^{2}_{x}}\ \les \ \lambda\ .$$
In view of the equation, we obtain
$$\|\Delta_\gm \psi\|_{L^{2}_x}\ \les \ \la^{2} \ .$$
As $\psi\in H^2\cap H^1_0$, we get from \cite[Theorem 9.11-13]{GT01} that
$$\|\nabla^2\psi\|_{L^{2}_x}\ \les \ \|\psi\|_{H^1}+ \|\Delta_\gm \psi\|_{L^{2}_x}\ \les 1\ \ .$$

Applying the Gagliardo-Nirenberg inequality (for the exterior domain, see, e.g., \cite{CM}), we get
\beeq\label{eq-L4}\|\psi\|_{L^{4}_x}\les\
\|\psi\|_{L^{2}_x}^{1/2}
\|\nabla \psi\|_{L^{2}_x}^{1/2}
\les\
\la^{1/2}\ ,\
\|\nabla \psi\|_{L^{4}_x}\les\
\|\nabla \psi\|_{L^{2}_x}^{1/2}
\|\nabla^2 \psi\|_{L^{2}_x}^{1/2}
\les\
\la^{1/2}\ .
\eneq
By Sobolev embedding, we obtain
\beeq\label{eq-Linfty}\|\psi\|_{L^{\infty}_x}\les
\| \psi\|_{L^{4}_x}+\|\nabla \psi\|_{L^{4}_x}
\les\
\la^{1/2}\ .
\eneq

Notice that
$\phi_\la$ tends to zero as $r\to R$,
the estimate \eqref{eq-Linfty} is not good  enough to be controlled by $\phi_\la$ near the boundary. For that purpose, we observe that, in view of the equations,  \eqref{eq-Lq-forcing} and \eqref{eq-L4}, we have
$$\|\Delta_\gm \psi\|_{L^{4}_x}\les \ \la^{2}\ .$$
As $\psi\in W^{2,4}\cap W^{1,4}_0$, we get from \cite[Theorem 9.11-13]{GT01} that
$$\|\nabla^2 \psi\|_{L^{4}_x}\les \|\psi\|_{W^{1,4}}+ \|\Delta_\gm \psi\|_{L^{4}_x}\les \ \la^{1/2}\ .$$
Another application of the Sobolev embedding gives us
\beeq\label{eq-Linfty2}\|\nabla \psi\|_{L^{\infty}_x}\les
\|  \nabla\psi\|_{L^{4}_x}
+\|\nabla\nabla\psi\|_{L^{4}_x}
\les\
\la^{1/2}\ ,
\eneq
which, together with the boundary condition
$\psi=0$ for $r=R$, yields
\beeq\label{eq-Linfty3}\|\psi(r,\theta)\|_{L^{\infty}_\theta}\les\
\la^{1/2}(r-R)\ .
\eneq

In summary, by
\eqref{eq-Linfty}, \eqref{eq-Linfty3} and
\eqref{eq-key0}
we have obtained
\beeq\label{eq-Linfty4} |\psi(r,\theta)|\les\
\la^{1/2}\min\{|x|-R, R\}\les\
\la^{1/2}\phi_0(r)\ll \phi_\la
\ ,
\eneq
which gives us the desired estimate
\eqref{eq-key} for $r\ge R$ and $\la<\la_3\ll \min(1, \la_2, \al/2, 1/(2R))$.

\section{
General obstacles
}\label{sec:diffeo}
In this section, we present the proof of Lemma \ref{thm-diffeo}, which enables us to reduce the problems with general obstacles to the problem exterior to a disk.

\subsection{Star-shaped obstacles}
It turns out that we could construct an explicit diffeomorphism, when
the obstacle $\mathcal{K}$ (the interior of the Jordan curve  $\pa D$) is star-shaped.
As $\mathcal{K}$
is star-shaped, there exists a smooth $R:\mathbb{S}^{1}\to \R_{+}$:
$$\mathcal{K}=\{(r\cos\theta, r\sin\theta), r< R(\theta)\in (0,\infty)\}\ .$$
  At first, we set $\de_2>0$ such that
$$R(\theta)\in [2\de_2, \de_2^{-1}/2]\ ,$$
$R_3=\de_2$ and $R_4=\de_2^{-1}$.
Let $\mu$ be a decreasing cut-off function such that
$\mu=0$ for $r>2$ and $\mu=1$ for $r<1$.
We set
\beeq
 f(r,\theta)=\mu(\frac r{R_4})\frac{R_3}{R(\theta)}r+(1-\mu(\frac r{R_4}))r
\eneq
and introduce
\beeq
\mathcal{A}: (r,\theta)\to (f(r,\theta),\theta).
\eneq
Notice that
$f(R(\theta), \theta)=R_3$,
$f(r, \theta)=r$ for any $r>2R_4$, and
 $$\pa_r f(r,\theta)=\mu(\frac r{R_4})\frac{R_3}{R(\theta)}+(1-\mu(\frac r{R_4}))-
r \mu'(\frac r{R_4})\frac{R(\theta)-R_3}{R_4 R(\theta)}
\in \left[\frac{R_3}{R(\theta)}, 1+\frac{(2R_4)C }{R_4 }\right]
$$
and so
\beeq\label{diffeo}\pa_r f(r,\theta)\in [2\de_2^2, 1+2C], \ \forall r, \theta\ , \eneq
where we have denoted $C=\|\mu'\|_{L^\infty}$ and used the fact that $\mu'=0$ unless $r\in [R_4, 2R_4]$.

It follows then that the map $$\mathcal{A}: D\to \Omega:=\R^2\backslash \overline{B_{R_3}}$$ is a smooth diffeomorphism mapping preserving the boundary, which is an identity map for $r>2 R_4$.

\subsection{General obstacles}
Let $\gamma$ be a Jordan curve in $\R^2$ and $D$ be the unbounded component of the two components divided by  $\gamma$. Without loss of generality, we assume $0\in \R^2\setminus \bar{D}$. Let
$$\hat{D}=\{z; z=x+iy, (x,y)\in D\}\subset \mathbb{C} \ , \check{D}=\{z;\frac{1}{z}\in \hat{D}\}.$$
Then by the Riemann mapping theorem (see, e.g., \cite[Chapter 5, Theorem 4.1]{T11}), there exist $R>0$ and (holomorphic) diffeomorphism $\mathcal{B}$
preserving the boundary such that
$$\mathcal{B}: \overline{\check{D}\cup\{0\}} \to \overline{B_{1/R}}, \ \mathcal{B}(0)=0, \ \mathcal{B}'(0)=1\ ,$$
which gives us a diffeomorphism mapping
 $$\mathcal{B}: \check{D} \to B_{1/R}\backslash\{0\}\ .$$
 Let $\delta\in (0,1)$ be a small parameter and $\mu(z)$ be a smooth cut-off function such that $\mu=0$ when $|z|\leq 2$ and $\mu=1$ when $|z|\geq 3$. Then we define the map
\beeq
\label{morp1}
\mathcal{H}(z)=\mu_{\delta}(z)\mathcal{B}(z)+(1-\mu_{\delta}(z))z,\ \mu_{\delta}(z)=\mu(\frac{z}{\delta})\ .
\eneq
We claim that there exists a small $\delta_{1}>0$ such that
\beeq
\label{morp2}
\mathcal{H}(z): \overline{\check{D}} \to \overline{B_{1/R}\backslash\{0\}}\eneq
 is a diffeomorphism mapping when $\delta<\delta_1$. Thus it is easy to see that
 $$\mathcal{A}(x,y)=\frac{1}{\mathcal{H}(\frac{1}{x+iy})}: D\to \Omega$$
is a desired diffeomorphism mapping. Hence we are reduced to showing the claim \eqref{morp2}.

By definition, we have $\mathcal{H}=\mathcal{B}$ when $|z|\geq 3\delta$ and $\mathcal{H}(z)=z$ when $|z|\leq 2\delta$. So we need only to consider the case $2\delta<|z|< 3\delta$. 
Let $\mathcal{B}:(x, y)\to (\tilde{F}, \tilde{G})$, then we have
\beeq
\label{morp6}
\mathcal{J}(\tilde{F},\tilde{G})(0)=\begin{pmatrix}
    \tilde{F}_x(0)  &   \tilde{F}_{y}(0) \\
   \tilde{G}_x(0)   &  \tilde{G}_y(0)
   \end{pmatrix}=\begin{pmatrix}
    1  &   0 \\
   0   &  1
   \end{pmatrix}\ .\eneq
   Let $\mathcal{H}: (x, y)\to (F, G)$, by definition \eqref{morp1} we have
\beeq \nonumber
F=x+\mu_{\delta}(\tilde{F}-x),\ G=y+\mu_{\delta}(\tilde{G}-y)
\eneq
and by \eqref{morp6}, for $r=\sqrt{x^2+y^2}<4\de$ small enough, we get that
$$F_x=1+\mathcal{O}(\delta), F_y=\mathcal{O}(\de),
G_x=\mathcal{O}(\delta), G_y=1+\mathcal{O}(\de)\ .$$
Then there exists $\de_3>0$, such that for any $\de\le \de_3$, we have
$\mathcal{H}(B_{3\de})\subset  B_{1/R}$ and
$$\mathcal{J}(F, G)(x,y)\in \left(\frac{1}{2}, \frac 32\right), \   \forall r< 4\de_3\ .$$
By inverse function theorem and compactness of $\bar B_{3\de_3}$,
there exists a uniform $\ep_1>0$ such that
$\mathcal{H}$ is a local diffeomorphism on $B_{\ep_1}(r_{0}, \theta_{0})$
for any
$r_{0}\in [0, 3\de_3]$ and
$\theta_{0}\in [0, 2\pi]$. Thus we are reduced to proving bijection of $\mathcal{H}$.

We firstly prove the injection.
For that purpose, we replace $(x, y)$ by polar coordinates $(r, \theta)$ and $(F, G)$ by $(R, \varphi)$. Recall that
$x\pa_y-y\pa_x=\pa_\theta$ and $x\pa_x+y\pa_y=r\pa_r$, we get
\beeq
\label{mor4}\varphi_\theta=1+\mathcal{O}(\de), R_{\theta}=\mathcal{O}(\de^2), \varphi_r=\mathcal{O}(1), R_r=1+\mathcal{O}(\de)\ , r<4\de\ .\eneq
Let $\Gamma_r=\mathcal{H}(\pa B_r)$ ($r>0$) be closed curves.
With possibly shrinking $\de_3>0$, we could assume, for some $C>0$,
\beeq
\label{mor7}
|\mathcal{O}(t)|\leq C t, C\de\le \frac{1}{4}\ .
\eneq
 Then by \eqref{mor4} we have $\varphi_\theta\in [3/4, 5/4]$ and thus the winding number of
 $\Ga_r$ about the origin $0$ must be $1$, which shows that $\Gamma_r$ is diffeomorphic to the unit circle $\mathbb{S}^1$.
To complete the proof of injection, we need only to show $\{\Ga_r\}_{0<r\le 3\de}$ are disjoint, for which it suffices to prove
$$\Gamma_{r_1}\cap \Gamma_{r_2}=\emptyset\ , \ 0<|r_1-r_2|<\frac{\de\ep}{2}<\frac{\ep}{2}\ .$$
In fact, if $\Gamma_{r_1}\cap \Gamma_{r_2}\neq\emptyset$ for some $r_1\neq r_2$ then $\varphi(r_1, \theta_1)=\varphi(r_2, \theta_2)$ for some $\theta_1, \theta_2$.
Without loss of generality, we assume $\theta_2>\theta_1$.
With $\theta_t=(1-t)\theta_0+t\theta_1$,
$r_t=(1-t)r_0+tr_1$,
by \eqref{mor4} and \eqref{mor7}, we have
$$0=\frac{d}{dt}\int_1^2 \varphi(r_t, \theta_t)dt=\int_1^2 \varphi_r \frac{d}{dt}r_t+\varphi_\theta \frac{d}{dt}\theta_t dt\geq
(1-C\de)(\theta_2-\theta_1)-C(r_2-r_1) ,$$
which yields $\theta_2-\theta_1\leq \frac{\ep\de}{2}\frac{C}{1-C\delta}<\frac{\ep}{2}$, that is
$$(r_2, \theta_2)\in B_{\ep}(r_1, \theta_1).$$
This is a contradiction to the  diffeomorphism  of $\mathcal{H}$ on $B_{\ep}(r_1, \theta_1).$

Let  $\Gamma^{\circ}_{r}$ denote the inner points of $\Gamma_{r}$, as $\Gamma_{0}=\{0\}$ and $\Gamma_{3\de}=\mathcal{B}(\pa B_{3\de})$, we see that $\Gamma_r$ is increasing about $r$ ($\Gamma^{\circ}_{r_1}\subset \Gamma^{\circ}_{r_2}$, if $r_1<r_2$).
As $\Ga_r$ depends continuously on $r$, then it is clear  that we have
$\mathcal{H}(B_{3\de})=\Gamma^{\circ}_{3\de}=\mathcal{B}(B_{3\de})$.
This completes the proof of \eqref{morp2}.

\begin{center}
ACKNOWLEDGMENTS
\end{center}
The first author is supported by NSF of Zhejiang Province(LY18A010008) and NSFC 11771194.
The second and fourth author were supported in part by NSFC 11971428.
The third author has been partially supported by
Grant-in-Aid for Scientific Research (B) (No.18H01132) and Grant-in-Aid for Scientific Research (B) (No. 19H01795) and Young Scientists Research (No. 20K14351),
Japan Society for the Promotion of Science.
\vskip10pt

\end{document}